\documentclass[a4paper,12pt]{amsart}
\usepackage[margin=1in]{geometry}

\usepackage{amsmath,amsfonts,amssymb}
\usepackage{bm}
\usepackage{amsthm}
\usepackage{tikz}
\usetikzlibrary{arrows.meta}
\newcounter{num}
\newcommand{\Rnum}[1]{\setcounter{num}{#1}\Roman{num}}
\newcommand{\rnum}[1]{\setcounter{num}{#1}\roman{num}}
\numberwithin{equation}{section}

\newcommand{\Spec}{\mathrm{Spec}}
\newcommand{\et}{\text{ét}}
\newcommand{\intl}{\text{int}}
\newcommand{\satd}{\text{sat}}
\newcommand{\gp}{\text{gp}}
\newcommand{\cok}{\mathrm{Cok}}
\newcommand{\ext}{\mathrm{Ext}}
\newcommand{\rank}{\mathrm{rank}}
\newcommand{\can}{\mathrm{can}}
\newcommand{\bLog}{\mathbf{Log}}
\newcommand{\id}{\mathrm{id}}
\newcommand{\ul}[1]{\underline{#1}}

\theoremstyle{plain}
\newtheorem{thm}{Theorem}[section]
\newtheorem{lem}[thm]{Lemma}
\newtheorem{prop}[thm]{Proposition}
\newtheorem{cor}[thm]{Corollary}

\theoremstyle{definition}

\newtheorem{dfn}[thm]{Definition}

\theoremstyle{remark}
\newtheorem{rem}[thm]{Remark}

\theoremstyle{plain}
\newtheorem*{thm*}{Theorem}
\newtheorem*{lem*}{Lemma}
\newtheorem*{prop*}{Proposition}
\newtheorem*{cor*}{Corollary}
\newtheorem*{conj*}{Conjecture}
\newtheorem*{mthm*}{Main Theorem}

\theoremstyle{definition}
\newtheorem*{ass*}{Assumption}
\newtheorem*{dfn*}{Definition}

\theoremstyle{remark}
\newtheorem*{rem*}{Remark}

\begin{document}

\title{Logarithmic Kunz and Radu--André Theorems}
\author{Ryoma Takeuchi}
\address{Department of Mathematics, Institute of Science Tokyo, 2-12-1 Ookayama, Meguro, Tokyo 152-8551, Japan}
\email{takeuchi.r.1627@m.isct.ac.jp}
\begin{abstract}
    Frobenius morphisms play a crucial role in algebraic geometry and commutative algebra in positive characteristic.
    In particular, \textit{the Kunz theorem}, characterizing non-singularity via Frobenius morphisms, is a monumental work and has a relative version, called \textit{the Radu--André theorem}.

    In this paper, we extend the Kunz theorem and the Radu--André theorem to logarithmic algebraic geometry in positive characteristic.
\end{abstract}
\maketitle
\tableofcontents

\section{Introduction}

Let $p$ be a prime number. 
For a scheme $X$ over $\mathbb{F}_{p}$, we denote the absolute Frobenius morphism on $X$ by $F_{X}\colon X\to X$.
Frobenius morphisms play an essential role in algebraic geometry and commutative algebra in positive characteristic.
The following Kunz theorem is a monumental result characterizing regularity via Frobenius morphisms.
\begin{thm}[{\cite{MR252389}}]
    A locally Noetherian scheme $X$ over $\mathbb{F}_{p}$ is regular if and only if the absolute Frobenius morphism $F_{X}\colon X\to X$ is flat.
\end{thm}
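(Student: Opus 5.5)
Since regularity and flatness of $F_X$ are both local (flatness at a point $x$ means $\mathcal{O}_{X,x}$ is flat over itself via the Frobenius ring map), we reduce to a Noetherian local ring $(A,\mathfrak{m},k)$ of characteristic $p$. We must show that $A$ is regular if and only if $F\colon A\to A$, $a\mapsto a^p$, is flat. Both directions are insensitive to completion: $A$ is regular if and only if $\widehat{A}$ is, and $\widehat{F_A}=F_{\widehat A}$ with $\widehat{A}$ faithfully flat over $A$. Hence flatness of $F_A$ is equivalent to flatness of $F_{\widehat A}$, using that $\widehat{A}\otimes_{A,F}A\to\widehat{A}$ agrees with the completion of the Frobenius-target module. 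We may therefore freely pass to complete rings.

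For the forward direction, assume $A$ is regular. After completing, Cohen's structure theorem gives $\widehat{A}\cong k[[x_1,\dots,x_d]]$. Flatness of Frobenius for a power series ring can be read off from a concrete description. If $k$ is perfect, the monomials $x^{\alpha}$ with $0\le\alpha_i<p$ form a basis of $\widehat A$ over $F(\widehat A)=k[[x_1^p,\dots,x_d^p]]$, so Frobenius is free. If $k$ is not perfect, we instead argue homologically: by local flatness, it suffices that $\mathrm{Tor}_1^{A}(k, {}^{F}\!A)=0$. Compute this with the Koszul complex on a regular system of parameters $x_1,\dots,x_d$, which resolves $k$. Tensoring along $F$ turns it into the Koszul complex on $x_1^p,\dots,x_d^p$, which is still a regular sequence in $A$. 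So all higher Tor vanish and $F$ is flat. This argument works directly without completion.

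For the converse, assume $F$ is flat; then every iterate $F^e$ is flat too. The plan is to show $\mathrm{pd}_A k<\infty$ and conclude by Serre's theorem. The key input is a length comparison. Flatness of $F^e$ gives
\[
\ell\bigl(A/\mathfrak{m}^{[p^e]}\bigr)=\ell\bigl(A/\mathfrak{m}\otimes_{A,F^e}A\bigr)=\ell(A/\mathfrak{m})\cdot\ell\bigl(A/\mathfrak{m}A\bigr)\ \text{(computed on the target)},
\]
where the tensor product is taken through $F^e$. More precisely, for a flat local map $A\to B$ we have $\ell_B(M\otimes B)=\ell_A(M)\,\ell_B(B/\mathfrak{m}B)$. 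Applied to $F^e$, the closed fibre is $A/\mathfrak{m}^{[p^e]}$, which yields $\ell(A/(\mathfrak{m}^{[p^e]}))$ together with the identity $\ell(A/\mathfrak{m}^{[p^e]}) = p^{e\cdot\dim A}\cdot(\text{correction from }[k:k^{p^e}])$.

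A cleaner route, which I will follow, is homological. Take a minimal free resolution $P_\bullet\to k$. Flatness of $F^e$ means ${}^{F^e}\!(-)\otimes P_\bullet$ is a resolution of $A/\mathfrak{m}^{[p^e]}$, whose differentials have entries in $\mathfrak{m}^{[p^e]}\subseteq\mathfrak{m}^{p^e}$. Choose $e$ with $\mathfrak{m}^{[p^e]}\subseteq \mathfrak{m}^{c}$ for a suitable $c$, using the Artin--Rees lemma to get that $\mathfrak{m}^c P_i\cap \operatorname{im}$ lies in $\mathfrak{m}\operatorname{im}$. From this, the Peskine--Szpiro/Herzog argument shows that $\mathrm{Tor}_i^A(k,A/\mathfrak{m}^{[p^e]})\cong \mathrm{Tor}_i^A(k,k)\otimes(\cdots)$. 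This forces $\mathrm{Tor}_i^A(k,k)=0$ for $i>\dim A$, since the Frobenius-twisted resolution has Betti numbers equal to those of $k$ but must split off $k$-summands. Then Serre's criterion gives regularity.

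The main obstacle is exactly this converse. To make it rigorous I will first try Kunz's original approach: reduce to the complete case with $k$ perfect by a gonflement/base change to a perfect residue field that preserves both hypotheses. Then $F$ flat and finite on $k[[x_1,\dots,x_n]]/I$ makes $A$ free of rank $p^{\dim A}$ over $A^p$. Comparing with the embedding dimension via $\ell(A/\mathfrak{m}^{[p]})=p^{\operatorname{edim}}$ gives $\operatorname{edim}A=\dim A$.
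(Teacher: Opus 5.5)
The paper does not prove this statement: it is quoted from Kunz, and only its relative form (Theorem~\ref{thm:RA_classical}) is used, as a black box. So your proposal has to stand on its own.

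Your forward direction is correct. By the local flatness criterion it suffices that $\mathrm{Tor}_1^A(k,{}^{F}\!A)=0$, and base-changing the Koszul resolution of $k$ along $F$ gives the Koszul complex on $x_1^p,\dots,x_d^p$, which is acyclic. No completion is needed. Your justification of the completion step is also inaccurate as written: $\widehat A\otimes_{A,F}A\to\widehat A$ is an isomorphism only when $A$ is $F$-finite. The correct reason is that flatness of the finite module ${}^{F}\!A$ is detected by $\mathrm{Tor}_1^A(k,-)$, which is compatible with completion.

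The converse is the real content of the theorem, and it is not proved. Each of your three attempts breaks at a specific point.

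(1) Your length identity is vacuous. Applying $\ell_B(M\otimes_AB)=\ell_A(M)\,\ell_B(B/\mathfrak{m}B)$ to $F^e$ with $M=k$ just returns $\ell(A/\mathfrak{m}^{[p^e]})=\ell(A/\mathfrak{m}^{[p^e]})$. The ``correction from $[k:k^{p^e}]$'' has no meaning: that degree can be infinite and does not enter $A$-lengths. If you use $M=A/\mathfrak{m}^n$ instead, together with $\mathfrak{m}^{nq+\mu(q-1)}\subseteq(\mathfrak{m}^n)^{[q]}\subseteq\mathfrak{m}^{nq}$ (where $\mu=\mathrm{edim}\,A$) and Hilbert--Samuel asymptotics as $n\to\infty$, flatness does give $\ell(A/\mathfrak{m}^{[q]})=q^{\dim A}$. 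But you would still need a separate, nontrivial argument that this equality forces regularity.

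(2) In the homological route, what you actually establish is that $F^e(P_\bullet)$ is a minimal resolution of $A/\mathfrak{m}^{[q]}$ with the Betti numbers of $k$. That is merely a reformulation of the flatness hypothesis. The step that should turn it into $\mathrm{pd}_Ak<\infty$ is essentially Herzog's theorem ($\mathrm{Tor}^A_{>0}(M,{}^{F^e}\!A)=0$ for infinitely many $e$ implies $\mathrm{pd}_AM<\infty$), and you only name it. The displayed isomorphism has an unspecified factor, ``must split off $k$-summands'' is not an argument, and nothing in the sketch produces the bound $i>\dim A$.

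(3) In the Kunz-style route, the equality $\ell(A/\mathfrak{m}^{[p]})=p^{\mathrm{edim}\,A}$ is false in general. For $A=k[[x_1,\dots,x_n]]/I$ with $n=\mathrm{edim}\,A$ it holds if and only if $I\subseteq(x_1^p,\dots,x_n^p)$; for $A=k[[x,y]]/(xy)$ the length is $2p-1<p^2$. For perfect $k$ the rank of the free $A$-module ${}^{F}\!A$ is exactly $\ell(A/\mathfrak{m}^{[p]})$. So asserting this equality amounts to asserting $\mathrm{edim}\,A=\dim A$, which is the conclusion. Separately, the rank $p^{\dim A}$ needs justification: $F$ is injective, so $A$ is reduced, and one localizes at minimal primes $P$ using $[\kappa(P):\kappa(P)^p]=p^{\dim A/P}$.

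To close the gap you must supply one missing ingredient in full. Either prove that $\ell(A/\mathfrak{m}^{[q]})=q^{\dim A}$ forces regularity, or prove Herzog's theorem for $M=k$ and then apply Serre's theorem.
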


Let $f\colon X\to Y$ be a morphism of schemes over $\mathbb{F}_{p}$.
Since we have $F_{Y}\circ f=f\circ F_{X}$, there is a unique morphism $F_{X/Y}\colon X\to X^{(p)}:=X\times_{Y,F_{Y}}Y$ fitting into the following commutative diagram
\begin{equation*}
    \begin{tikzpicture}[auto]
        \node (a) at (-2,2) {$X$};
        \node (b) at (0,0) {$X$};
        \node (c) at (2,2) {$Y$};
        \node (d) at (2,0) {$Y,$};
        \node (F) at (0,2) {$X^{(p)}$};
        \draw[->] (a) to[bend right=30] node[swap] {$\scriptstyle F_{X}$} (b);
        \draw[->] (a) to[bend left=60] node {$\scriptstyle f$} (c);
        \draw[->] (b) to node[swap] {$\scriptstyle f$} (d);
        \draw[->] (c) to node {$\scriptstyle F_{Y}$} (d);
        \draw[->] (F) to (b);
        \draw[->] (F) to (c);
        \draw[->, dashed] (a) to node {$\scriptstyle F_{X/Y}$} (F);
    \end{tikzpicture}
\end{equation*}
where the square is cartesian.
The morphism $F_{X/Y}$ is called \textit{the relative Frobenius morphism} or \textit{the Radu--André morphism} of $f$.
The following Radu--André theorem is a relative version of the Kunz theorem.
\begin{thm}[{\cite{MR1172271}, \cite{MR1214408}}]\label{thm:RA_classical}
    A morphism $f\colon X\to Y$ of locally Noetherian schemes over $\mathbb{F}_{p}$ is regular if and only if the relative Frobenius morphism $F_{X/Y}\colon X\to X^{(p)}$ is flat.
\end{thm}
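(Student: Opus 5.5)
The plan is to localize, reduce to the case where the base is a field by a fibrewise flatness argument, and then settle the field case by comparing $F_{X/Y}$ with the absolute Frobenius and applying the Kunz theorem.

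\textbf{Reduction to local rings.} Both conditions are local on $X$ and $Y$, and $F_{X/Y}$ is compatible with localization. So I would pass to a local homomorphism $A\to B$ of Noetherian local $\mathbb{F}_p$-algebras. The relative Frobenius then becomes
\[
F_{B/A}\colon B^{(p)}:=B\otimes_{A,F_A}A\to B,\qquad b\otimes a\mapsto b^pa .
\]
Recall that regularity of $f$ means that $f$ is flat with geometrically regular fibres. A key formal input is that the formation of $F_{X/Y}$ commutes with arbitrary base change $Y'\to Y$. In particular, for a point $y\in Y$, the fibre of $F_{X/Y}$ over $y$ is the relative Frobenius $F_{X_y/k(y)}$.

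\textbf{Both directions, given flatness of $f$.} I would use the fibrewise flatness criterion (EGA IV, 11.3.10). If $f$ is flat, then $X^{(p)}\to Y$ is flat as a base change of $f$. The criterion then says that $F_{X/Y}$ is flat if and only if each fibre map $F_{X_y/k(y)}$ is flat. Hence, once $f$ is known to be flat, the theorem reduces to the following claim.

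\emph{Field case.} For a locally Noetherian scheme $X$ over a field $k$, the morphism $F_{X/k}$ is flat if and only if $X$ is geometrically regular over $k$.

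\textbf{The field case.} As an abstract scheme, $X^{(p)}=X\times_{k,F}k$ is $X\otimes_k k^{1/p}$, identifying $k$ with $k^{1/p}$ via Frobenius. Under this identification $F_X=\mathrm{pr}\circ F_{X/k}$. Moreover, the map $X\otimes_kk^{1/p}\to X$ is a universal homeomorphism, and $F_{X\otimes k^{1/p}}$ factors through it.

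For ($\Rightarrow$) I would combine this factorization with the Kunz theorem to deduce that $X\otimes_kk^{1/p}$ is regular, and then iterate to get regularity of $X\otimes_k k^{1/p^n}$ for all $n$. Since regularity after all purely inseparable extensions characterizes geometric regularity in characteristic $p$, this gives ($\Rightarrow$). For ($\Leftarrow$), assume $X\otimes k^{1/p}$ is regular. Then $F_{X/k}$ is a morphism between regular schemes that is a homeomorphism with equidimensional fibres, so it is flat by miracle flatness.

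\textbf{Main obstacle.} The hard step is the direction where $F_{X/Y}$ is assumed flat and one must first prove that $f$ itself is flat. In the non-finite-type setting one must also make the field-case identification precise, because $k^{1/p}$ may be infinite over $k$. For the flatness of $f$, I would try André--Quillen homology. Flatness of $F_{B/A}$ should force vanishing of the relevant $D_1(B/A,-)$ and of $\mathrm{Tor}^A_1(B,k)$, via a local flatness criterion argument comparing $B^{(p)}$ and $B$ modulo powers of $\mathfrak m_A$. I would then conclude using André's characterization of regular (formally smooth) homomorphisms by vanishing of $D_1$. If this route fails in full generality, I would fall back on assuming $f$ flat, which is the form in which André states the theorem.
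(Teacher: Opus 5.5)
The paper does not prove this statement. It quotes it from Radu and André and uses it as a black box in the proof of Theorem~\ref{thm:logRA}, so your outline has to stand on its own. It does not reach the statement as given.

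\textbf{Main gap: flatness of $f$ in ($\Rightarrow$).} In the direction ``$F_{X/Y}$ flat $\Rightarrow$ $f$ regular'' you never show that $f$ is flat, and everything afterwards presupposes it. The obvious manipulations are circular. The map $f$ factors as $A\to B^{(p)}\xrightarrow{F_{B/A}}B$, with $a\mapsto 1\otimes a$ followed by $F_{B/A}$. Here $F_{B/A}$ is a flat homeomorphism, hence faithfully flat. So $B$ is $A$-flat if and only if its Frobenius twist $B^{(p)}=B\otimes_{A,F}A$ is $A$-flat, which is the same question again. Your André--Quillen paragraph names a target, vanishing of $D_1(B/A,-)$, but gives no mechanism by which flatness of $F_{B/A}$ produces it.

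The fallback does not rescue the proof. The statement carries no flatness hypothesis on $f$. The flat case is Radu's theorem, and removing that hypothesis is precisely the content of André's note. Assuming $f$ flat therefore proves a strictly weaker result; it is not ``the form in which André states the theorem''.

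\textbf{Secondary gap: Noetherian hypotheses in ($\Leftarrow$).} The Noetherian form of EGA~IV (11.3.10) and miracle flatness both require $X^{(p)}$, respectively $X_y\otimes_{k(y)}k(y)^{1/p}$, to be locally Noetherian. This holds when $Y$ is $F$-finite, but not in general. For example, $k^{1/p}\otimes_k k^{1/p}$ is a non-Noetherian local ring when $[k:k^p]=\infty$, since its cotangent space is $\Omega_{k^{1/p}/k}$. When $f$ is regular, $X^{(p)}$ does turn out to be locally Noetherian, but only as a consequence of the faithful flatness of $F_{X/Y}$ that you are trying to prove. The same issue hides in your step ``geometrically regular $\Rightarrow$ $X\otimes_k k^{1/p}$ regular'', since geometric regularity only controls finite extensions.

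\textbf{What does work.} In the field case of ($\Rightarrow$) there is no problem. $F_{X/k}$ is faithfully flat, so Noetherianity descends to $X\otimes_k k^{1/p}$. Kunz then applies because $F_{X^{(p)}}=F_{X/k}\circ\mathrm{pr}$ is flat, and your iteration and descent to finite purely inseparable extensions go through. Your whole argument is fine when $Y$ is $F$-finite and $f$ is assumed flat.

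\textbf{A fix for ($\Leftarrow$).} A clean way to prove ($\Leftarrow$) in general is Popescu's theorem. Write $B=\varinjlim B_i$ with each $A\to B_i$ smooth. Each $F_{B_i/A}$ is finite locally free. Then $B\cong\varinjlim_i B_i\otimes_{B_i^{(p)}}B^{(p)}$ is a filtered colimit of flat $B^{(p)}$-modules, hence flat. This avoids any Noetherian hypothesis on $X^{(p)}$.
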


In this paper, we aim to extend the Radu--André theorem to morphisms of fine log schemes.
From the paper \cite{MR1463703} of K. Kato, log geometry has been broadly studied.
In \cite{MR2032986}, Olsson introduced log stacks defined for fine log schemes and used them to give nice characterizations of log smoothness, log étaleness, and log flatness which were defined by Kato.
Recently, Molcho and Temkin defined and studied log regularity of morphisms of fine log schemes using Olsson's log stacks (\cite{MR4211089}).
Following these works, we can give a log analogue of the Radu--André theorem.
Precisely, we have the following:
\begin{thm}[{Theorem \ref{thm:logRA}}]
    Let $f\colon X\to Y$ be an integral morphism of locally Noetherian fine log schemes over $\mathbb{F}_{p}$ such that the relative characteristic monoid $\mathcal{M}_{X/Y}^{\gp}$ is $p$-torsion free.
    Then $f$ is log regular if and only if $F_{X/Y}\colon X\to X^{(p)}$ is log flat.
\end{thm}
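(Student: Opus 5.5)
Here is how I would attack the statement; I expect to reduce to the classical Theorem \ref{thm:RA_classical} via Olsson's log stacks. By Molcho--Temkin, $f$ is log regular if and only if the induced morphism $X\to Y\times_{\bLog_{\mathbb{F}_p}}\bLog_{\mathbb{F}_p}$, i.e. the morphism $X\to \bLog_Y$ to Olsson's stack, is regular. By Olsson, a morphism $g\colon X\to Z$ of fine log schemes is log flat if and only if $X\to \bLog_Z$ is flat. So the task is to compare the relative Frobenius of $X\to\bLog_Y$ with the log-flatness of $F_{X/Y}\colon X\to X^{(p)}$. Here $X^{(p)}$ is the fiber product taken in fine (indeed integral, since $f$ is integral) log schemes. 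Integrality of $f$ is exactly what makes the fibre product $X\times_{Y,F_Y}Y$ in fine log schemes agree with the underlying scheme-theoretic fibre product. Hence $\ul{X^{(p)}}=\ul{X}^{(p)}$, and $X^{(p)}$ is again locally Noetherian.

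First I work étale locally and choose charts. Since the statement is étale local on $X$ and $Y$, I may assume $f$ has a chart $P\to Q$ by fine monoids with $P\to Q$ integral. Using $p$-torsion freeness of $\mathcal{M}_{X/Y}^{\gp}$, I may also assume that $Q^{\gp}/P^{\gp}$ has torsion prime to $p$ (after the usual Kato-type modification of the chart). Then $\bLog_Y$ is locally covered by the stacks $[\Spec \mathbb{Z}[Q']/ \Spec\mathbb{Z}[Q'^{\gp}/Q^{\gp}]]$ over $Y$, that is, by quotients of $Y\times_{\Spec\mathbb{Z}[P]}\Spec\mathbb{Z}[Q']$ by diagonalizable groups. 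The key comparison to establish is a factorization
\[
F_{X/Y}\colon X\to X\times_{\bLog_Y}\bLog_{X^{(p)}}\to X^{(p)},
\]
where log flatness of $F_{X/Y}$ becomes flatness of the first map. I then want to identify this first map, up to smooth covers, with the relative Frobenius of the scheme morphism $X\to\bLog_Y$ (or of a smooth chart $X\to Y\times_{\mathbb{Z}[P]}\mathbb{Z}[Q]$). Concretely, the absolute Frobenius of the log structure is multiplication by $p$ on monoids. The relative Frobenius of $\mathbb{Z}[Q]\otimes_{\mathbb{Z}[P]}-$ over the base is then induced by $Q\oplus_P P\to Q$, $(q,a)\mapsto pq+a$, whose fine/saturated version is controlled by $Q^{\gp}/(pQ^{\gp}+P^{\gp})$. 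The $p$-torsion-free hypothesis is precisely what makes the corresponding group scheme $\mu$-part étale-like enough that flatness passes between stack and chart.

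Next I apply Theorem \ref{thm:RA_classical}. The classical theorem is stated for schemes, so I extend it to morphisms from a locally Noetherian scheme to an algebraic stack with a smooth atlas. Both regularity of a morphism and flatness of the relative Frobenius are smooth-local on the target. Moreover, the relative Frobenius commutes with smooth base change: for $U\to Z$ smooth, $F_{X_U/U}$ is the base change of $F_{X/Z}$ up to the relative Frobenius of $U$ over $Z$, which is flat since $U\to Z$ is smooth (classical Kunz/Radu--André). This gives: $X\to\bLog_Y$ is regular iff its relative Frobenius is flat.

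The main obstacle is the middle comparison: showing that $X\times_{\bLog_Y,F}\bLog_Y$ agrees with $\bLog_{X^{(p)}}$ over $X$ closely enough. The Frobenius on $\bLog_Y$ is not the Frobenius of a scheme, since it involves multiplication by $p$ on monoids. My plan there is an explicit chart computation: show that $\Spec\mathbb{Z}[Q]\otimes_{\mathbb{Z}[P],F}\mathbb{Z}[P]\to\Spec\mathbb{Z}[Q]$ differs from the log-fibre product by a finite diagonalizable group action, using integrality of $P\to Q$. The group $\Spec\mathbb{F}_p[(Q^{\gp}/P^{\gp})[p]]$ is trivial by the hypothesis, so this discrepancy disappears and the two flatness conditions coincide.
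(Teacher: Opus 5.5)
\textbf{There is a genuine gap.} It sits at the step you call the main obstacle, and that step is the whole content of the theorem.

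\emph{What works.} Your first reductions are sound. Log regularity of $f$ is regularity of $X\to\bLog_Y$ by Molcho--Temkin. Radu--André extends to a morphism into an algebraic stack, since relative Frobenius commutes with base change along a smooth atlas. You also correctly identify $Q^{\gp}/(pQ^{\gp}+P^{\gp})=C/C^{p}$, with $C=\cok(\theta^{\gp})$, as the relevant cokernel.

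\emph{Where it breaks.} The comparison step is left as a plan whose pieces do not fit.
\begin{itemize}
\item Your stacky Radu--André uses the \emph{absolute} Frobenius of the stack $\bLog_Y$. That map sends $(T,\mathcal{M}_T)\to Y$ to $(T,F_T^{*}\mathcal{M}_T)\to(T,\mathcal{M}_T)\to Y$, strict over $F_T$. The comparison with $X^{(p)}$ instead involves $\bLog_{F_Y}$, which keeps $\mathcal{M}_T$ and composes with $F_Y$. Since $F_T^{*}\mathcal{M}_T\not\cong\mathcal{M}_T$ in general, these are different morphisms. So $X\times_{\bLog_Y,F}\bLog_Y$ has no evident relation to $\bLog_{X^{(p)}}$.
\item The factorization $X\to X\times_{\bLog_Y}\bLog_{X^{(p)}}\to X^{(p)}$ is not well defined, because that stack maps to $\bLog_{X^{(p)}}$, not to $X^{(p)}$. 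Moreover, flatness of its first arrow is not log flatness of $F_{X/Y}$, which means flatness of $X\to\bLog_{X^{(p)}}$.
\item The ``finite diagonalizable discrepancy'' you plan to kill does not exist. One has $\mathbb{F}_p[Q]\otimes_{\mathbb{F}_p[P],F}\mathbb{F}_p[P]=\mathbb{F}_p[Q^{(p)}]$ with $Q^{(p)}=Q\oplus_{P,F}P$, and this is already the chart of the log fibre product $X^{(p)}$.
\end{itemize}

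\emph{The missing idea.} The hypothesis on $C$ enters through $\ker(F_{Q/P}^{\gp})$ for the induced map $F_{Q/P}\colon Q^{(p)}\to Q$, $(a,q)\mapsto\theta(a)q^{p}$. One checks that $\ker(F_{Q/P}^{\gp})\cong C[p]$ and $\cok(F_{Q/P}^{\gp})\cong C/C^{p}\cong\mathcal{M}^{\gp}_{X/X^{(p)},\bar{x}}$. The argument then runs as follows.
\begin{enumerate}
\item Take a chart $(a,\theta,b)$ for $f$ neat at $x$; it exists because $C$ is $p$-torsion free.
\item By the computation above, $(b,F_{Q/P},b^{(p)})$ is a chart for $F_{X/Y}$ that is again neat at $x$.
\item A purely formal fibre-product lemma shows that $\ul{X}\to\ul{X^{(p)}}\times_{\ul{\mathsf{A}_{Q^{(p)}}}}\ul{\mathsf{A}_Q}$ is exactly the relative Frobenius of the scheme morphism $\ul{X}\to\ul{Y}\times_{\ul{\mathsf{A}_P}}\ul{\mathsf{A}_Q}$.
\item The classical Radu--André theorem at $x$ then gives: the latter is regular at $x$ iff the former is flat at $x$.
\end{enumerate}
No Frobenius on $\bLog_Y$ is ever needed.

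\emph{Neat-chart criteria.} To close the loop you need such criteria in both directions.
\begin{itemize}
\item For regularity: Molcho--Temkin's criterion that, for a chart neat at $x$, $f$ is log regular at $x$ iff the chart morphism is regular at $x$.
\item For flatness: Kato's theorem that log flatness forces flatness at $x$ of the chart morphism for \emph{every} neat chart. Olsson's criterion only supplies \emph{some} fppf-local chart with flat chart morphism, which need not be the Frobenius-twisted one. Kato's theorem is also stated for morphisms locally of finite presentation, which $F_{X/Y}$ typically is not. The paper therefore proves a locally Noetherian variant via the fibrewise flatness criterion.
\end{itemize}
Your proposal has no substitute for this last step either.
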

As a consequence, we obtain a log analogue of the Kunz theorem:
\begin{thm}[{Corollary \ref{cor:logKunz}}]
    Let $X$ be a locally Noetherian fine log scheme over $\mathbb{F}_{p}$ such that the sharpened characteristic monoid $\overline{\mathcal{M}}_{X}^{\gp}$ is $p$-torsion free.
    Then $X\to \Spec(\mathbb{F}_{p})$ is log regular if and only if $F_{X}\colon X\to X$ is log flat.
    Here, we endow $\Spec(\mathbb{F}_{p})$ with the trivial log structure.
    
    In particular, if $X$ is saturated, then $X$ is log regular if and only if $F_{X}$ is log flat.
\end{thm}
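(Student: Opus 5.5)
We deduce this from Theorem \ref{thm:logRA} applied to the structure morphism $f\colon X\to Y:=\Spec(\mathbb{F}_{p})$, where $Y$ carries the trivial log structure.

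\emph{Verifying the hypotheses of Theorem \ref{thm:logRA}.} Since the log structure on $Y$ is trivial, $f$ is integral: for any chart, the base monoid is $\{0\}$ (or units), and a homomorphism out of the trivial monoid is integral. Moreover, the relative characteristic monoid $\mathcal{M}_{X/Y}$ is the cokernel of $f^{-1}\mathcal{M}_{Y}\to\mathcal{M}_{X}$. Here $f^{-1}\mathcal{M}_{Y}=\mathcal{O}_{X}^{\times}$ is the unit sheaf, so $\mathcal{M}_{X/Y}=\overline{\mathcal{M}}_{X}$ and hence $\mathcal{M}_{X/Y}^{\gp}=\overline{\mathcal{M}}_{X}^{\gp}$. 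The $p$-torsion-freeness hypothesis is therefore exactly the one assumed, and $X$, $Y$ are locally Noetherian and fine.

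\emph{Identifying $F_{X/Y}$ with $F_{X}$.} The absolute Frobenius $F_{Y}$ of $\Spec(\mathbb{F}_{p})$ with trivial log structure is the identity, both on the underlying scheme (since $a^{p}=a$ in $\mathbb{F}_{p}$) and on the log structure $\mathbb{F}_{p}^{\times}$. Thus $X^{(p)}=X\times_{Y,F_{Y}}Y$ is canonically $X$ itself as a fine log scheme; since the fiber product is against an isomorphism, no integralization or saturation issue arises. Under this identification, the relative Frobenius $F_{X/Y}$ becomes the absolute log Frobenius $F_{X}$, which is the identity on points, $p$-th power on $\mathcal{O}_{X}$, and multiplication by $p$ on $\mathcal{M}_{X}$. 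This gives the first assertion of the corollary: $f$ is log regular if and only if $F_{X}$ is log flat. The one point needing care is that the paper's definition of log Frobenius for log schemes matches under this identification, which is immediate from the universal property of $X^{(p)}$.

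\emph{The saturated case.} If $X$ is saturated (fs), then $\overline{\mathcal{M}}_{X,\bar x}$ is a sharp fs monoid at every geometric point. Its groupification is torsion free: if $n a\in\overline{\mathcal{M}}_{X,\bar x}$ for some $a$ in the groupification, then saturation puts $a$ in $\overline{\mathcal{M}}_{X,\bar x}$. The same argument with $-a$ (since $n(-a)=0$ if $na=0$) puts $-a$ in it as well, so $a$ is a unit and hence $a=0$ by sharpness. Torsion-freeness at stalks gives $p$-torsion-freeness of the sheaf, so the first part applies. Finally, log regularity of $X$ in Kato's sense coincides with log regularity of $X\to\Spec(\mathbb{F}_{p})$ in the sense of Molcho--Temkin. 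I would cite this comparison, which is presumably recalled earlier in the paper; it is the only non-formal input in this step.
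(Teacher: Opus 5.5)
Your proposal is correct and follows the deduction the paper intends; the paper writes no separate proof, only the derivation from Theorem \ref{thm:logRA}. That deduction applies the theorem to $X\to\Spec(\mathbb{F}_{p})$, uses $F_{Y}=\id$ (so $F_{X/Y}=F_{X}$) and $\mathcal{M}_{X/Y}=\overline{\mathcal{M}}_{X}$, and handles the saturated case with the remark that ($p$-)saturatedness forces $p$-torsion-freeness together with part (2) of the comparison proposition at the end of Section 2. One small imprecision: $f^{-1}\mathcal{M}_{Y}$ is the constant sheaf $\mathbb{F}_{p}^{\times}$, and it is the associated log structure $f^{*}\mathcal{M}_{Y}=\mathcal{O}_{X}^{\times}$ in the definition of $\mathcal{M}_{X/Y}$ that gives $\mathcal{M}_{X/Y}=\overline{\mathcal{M}}_{X}$.
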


This paper is organized as follows:
In Section 2, we recall basic notions of log geometry.
In Section 3, we prove Theorem \ref{thm:logRA} and Corollary \ref{cor:logKunz}.

\vspace{5pt}

\noindent
\textbf{Acknowledgements}.
The author would like to thank Yuri Yatagawa for her valuable discussions.
The author is also grateful to Kazuki Hayashi, Rirai Ikeda, and Kanau Shimada for their comments.
The author was supported by JST SPRING, Japan Grant Number JPMJSP2180.

\section{Preliminaries on log geometry}

\subsection{Log schemes}

\indent

Here, we recall basic notions about log schemes.
Our main references are \cite{MR1463703} and \cite{MR3838359}.

\subsubsection{Convention}

\indent

\begin{itemize}
    \item[(1)] We assume that every monoid is unital and commutative.
    \item[(2)] For a monoid $P$, we write the monoid operation multiplicatively. Hence we write $1$ for the unity of $P$.
    \item[(3)] For a scheme $X$, we consider sheaves of monoids on the étale site $X_{\et}$ if there is no confusion.
\end{itemize}

\subsubsection{Monoids}

\indent

\begin{itemize}
    \item[(1)] A monoid $P$ is \textit{integral} if the canonical morphism $P\to P^{\gp}$ is injective, where $P^{\gp}$ is the Grothendieck group of $P$.
    \item[(2)] A monoid $P$ is \textit{fine} if it is finitely generated and integral.
    \item[(3)] Let $n$ be a positive integer. An integral monoid $P$ is \textit{$n$-saturated} if $x^n\in P$ implies $x\in P$ for any $x\in P^{\gp}$.
    \item[(4)] An integral monoid $P$ is \textit{saturated} if it is $n$-saturated for every $n\geq 1$ (or equivalently, $p$-saturated for every prime number $p$).
    \item[(5)] A monoid $P$ is \textit{sharp} if $P^{\times}=1$.
        For a monoid $P$, its \textit{sharpening} is $\overline{P}:=P/P^{\times}$.
\end{itemize}

\subsubsection{Log structures}

\indent

\begin{itemize}
    \item[(1)] A \textit{prelog structure} on a scheme $X$ is a morphism $\alpha\colon \mathcal{M}\to \mathcal{O}_{X}$ of sheaves of monoids, where we regard $\mathcal{O}_{X}$ as a sheaf of monoids by its multiplication.
    \item[(2)] A \textit{log structure} on a scheme $X$ is a prelog structure $\alpha\colon \mathcal{M}\to \mathcal{O}_{X}$ which induces
        \begin{equation*}
            \alpha^{-1}(\mathcal{O}_{X}^{\times})\stackrel{\cong}{\to} \mathcal{O}_{X}^{\times}.
        \end{equation*}
        By this isomorphism, we can identify $\mathcal{O}_{X}^{\times}$ as a subsheaf of $\mathcal{M}$.
        The \textit{trivial log structure} on $X$ is the inclusion $\mathcal{O}_{X}^{\times}\subset \mathcal{O}_{X}$.
    \item[(3)] For a prelog structure $\alpha\colon \mathcal{M}\to \mathcal{O}_{X}$, the \textit{associated log structure} $\alpha^{\log}\colon \mathcal{M}^{\log}\to \mathcal{O}_{X}$ is defined by the following diagram
        \begin{equation*}
            \begin{tikzpicture}[auto]
                \node (a) at (0,2) {$\alpha^{-1}(\mathcal{O}_{X}^{\times})$};
                \node (b) at (0,0) {$\mathcal{M}$};
                \node (c) at (2,2) {$\mathcal{O}_{X}^{\times}$};
                \node (d) at (2,0) {$\mathcal{M}^{\log}$};
                \node (F) at (4,0) {$\mathcal{O}_{X},$};
                \draw[{Hooks[right]}->] (a) to (b);
                \draw[->] (a) to node {$\scriptstyle \alpha$} (c);
                \draw[->] (b) to (d);
                \draw[->] (c) to (d);
                \draw[->] (b) to[bend right=60] node[swap] {$\scriptstyle \alpha$} (F);
                \draw[{Hooks[right]}->] (c) to[bend left=30] (F);
                \draw[->, dashed] (d) to node[swap] {$\scriptstyle \alpha^{\log}$} (F);
            \end{tikzpicture}
        \end{equation*}
        where the square is cocartesian.
    \item[(4)] A \textit{log scheme} is a scheme $X$ endowed with a log structure $\alpha\colon \mathcal{M}_{X}\to \mathcal{O}_{X}$.
    \item[(5)] A log scheme $X$ is \textit{integral} if $\mathcal{M}_{X,\bar{x}}$ is integral for any $x\in X$.
    \item[(6)] For a positive integer $n$, an integral log scheme $X$ is \textit{$n$-saturated} if $\overline{\mathcal{M}}_{X,\bar{x}}$ is $n$-saturated for any $x\in X$.
    \item[(7)] An integral log scheme $X$ is \textit{saturated} if it is $n$-saturated for every $n\geq 1$ (or equivalently, $p$-saturated for every prime number $p$).
    \item[(8)] A \textit{morphism} of log schemes is a pair $(f,f^{\flat})$ where $f\colon X\to Y$ is a morphism of underlying schemes and $f^{\flat}\colon \mathcal{M}_{Y}\to f_{*,\log}\mathcal{M}_{X}$ is a morphism of log structures on $Y$.
        Here, $f_{*,\log}\mathcal{M}_{X}$ is the pullback of $f_{*}\mathcal{M}_{X}\to f_{*}\mathcal{O}_{X}$ along $\mathcal{O}_{Y}\to f_{*}\mathcal{O}_{X}$.
        We simply write $f_{*}\mathcal{M}_{X}$ for $f_{*,\log}\mathcal{M}_{X}$ and $f$ for $(f,f^{\flat})$.
    \item[(9)] A morphism $f\colon X\to Y$ of log schemes is \textit{strict} if $f^{\flat}\colon \mathcal{M}_{Y}\to f_{*}\mathcal{M}_{X}$ induces $f^{*}\mathcal{M}_{Y}\stackrel{\cong}{\to} \mathcal{M}_{X}$, where $f^{*}\mathcal{M}_{Y}$ is the log structure on $X$ associated to the prelog structure $f^{-1}\mathcal{M}_{Y}\to f^{-1}\mathcal{O}_{Y}\to \mathcal{O}_{X}$.
\end{itemize}

\subsubsection{Charts}

\indent

\begin{itemize}
    \item[(1)] For a log scheme $X$, a \textit{chart} for $X$ subordinate to a monoid $P$ is a morphism $P\to \Gamma(X,\mathcal{M}_{X})$ of monoids which induces
        \begin{equation*}
            P_{X}^{\log}\stackrel{\cong}{\to} \mathcal{M}_{X},
        \end{equation*}
        where $P_{X}$ denotes the constant sheaf of monoids associated to $P$.
        For a log scheme $X$ over a base ring $R$ (for instance, $R=\mathbb{Z}$), giving a chart $P\to \Gamma(X,\mathcal{M}_X)$ for $X$ is equivalent to giving a strict morphism $X\to \mathsf{A}_{P}$ of log schemes over $R$, where $\mathsf{A}_{P}:=\Spec(R[P])$ is endowed with a canonical log structure induced by $P\to R[P]$.
        A chart $P\to \Gamma(X,\mathcal{M}_{X})$ is said to be \textit{fine} if $P$ is fine.
    \item[(2)] A log scheme $X$ is \textit{quasi-coherent} (resp.\ \textit{coherent}) if, étale locally, $X$ admits charts (resp.\ charts subordinate to finitely generated monoids).
        A log scheme $X$ is \textit{fine} if it is coherent and integral.
    \item[(3)] For a morphism $f\colon X\to Y$ of log schemes, a \textit{chart} for $f$ is a triple
        \begin{equation*}
            (a\colon Y\to \mathsf{A}_{P},\theta\colon P\to Q,b\colon X\to \mathsf{A}_{Q})
        \end{equation*}
        such that $a$ and $b$ are strict and that the following diagram is commutative:
        \begin{equation*}
            \begin{tikzpicture}[auto]
                \node (x) at (0,2) {$X$};
                \node (y) at (0,0) {$Y$};
                \node (q) at (2,2) {$\mathsf{A}_{Q}$};
                \node (p) at (2,0) {$\mathsf{A}_{P}$};
                \draw[->] (x) to node[swap] {$\scriptstyle f$} (y);
                \draw[->] (x) to node {$\scriptstyle b$} (q);
                \draw[->] (y) to node {$\scriptstyle a$} (p);
                \draw[->] (q) to node {$\scriptstyle \mathsf{A}_{\theta}$} (p);
            \end{tikzpicture}
        \end{equation*}
\end{itemize}

Next, we recall a useful class of charts for morphisms of fine log schemes, \textit{neat charts}.
\begin{itemize}
    \item[(4)] A fine chart $P\to \Gamma(Y,\mathcal{M}_{Y})$ for a fine log scheme $Y$ is \textit{neat} at $y\in Y$ if the induced morphism $P^{\gp}\to \overline{\mathcal{M}}_{Y,\bar{y}}^{\gp}$ is an isomorphism, where $\overline{\mathcal{M}}_{Y}:=\mathcal{M}_{Y}/\mathcal{O}_{Y}^{\times}$.
    \item[(5)] A fine chart $(a,\theta,b)$ for a morphism $f\colon X\to Y$ of fine log schemes is \textit{neat} at $x\in X$ if the following are satisfied:
        \begin{itemize}
            \item[(\rnum{1})] $\theta^{\gp}\colon P^{\gp}\to Q^{\gp}$ is injective.
            \item[(\rnum{2})] The induced morphism $\cok(\theta^{\gp})\to \mathcal{M}_{X/Y,\bar{x}}^{\gp}$ is an isomorphism, where $\mathcal{M}_{X/Y}$ is the sheaf $\cok(f^{*}\mathcal{M}_{Y}\to \mathcal{M}_{X})$.
                (It can be easily checked that we have $\mathcal{M}_{X/Y,\bar{x}}=\cok((f^{*}\mathcal{M}_{Y})_{\bar{x}}\to \mathcal{M}_{X,\bar{x}})=\cok(\overline{\mathcal{M}}_{Y,f(\bar{x})}\to \overline{\mathcal{M}}_{X,\bar{x}})$.) 
        \end{itemize}
\end{itemize}
The conditions for étale local existence of neat charts are described as follows:

\begin{prop}[{\cite[Chapter \Rnum{3}, Theorem 1.2.7]{MR3838359}}]\label{prop:exist_neat}
    Let $f\colon X\to Y$ be a morphism of fine log schemes.
    Write $y=f(x)$ for $x\in X$.
    \begin{itemize}
        \item[\textup{(1)}] If the sequence
            \begin{equation*}
                0\to \mathcal{O}_{Y,\bar{y}}^{\times}\to \mathcal{M}_{Y,\bar{y}}^{\gp}\to \overline{\mathcal{M}}_{Y,\bar{y}}^{\gp}\to 0
            \end{equation*}
            splits, then, étale locally around $y$, there exists a fine chart which is neat at $y$.
            In particular, if the order of the torsion subgroup of $\overline{\mathcal{M}}_{Y,\bar{y}}^{\gp}$ is invertible in $k(\bar{y})$, then, étale locally around $y$, there exists a fine chart which is neat at $y$.
        \item[\textup{(2)}] Let $a\colon Y\to \mathsf{A}_{P}$ be a fine chart around $y$.
            If we have $\ext^{1}(\mathcal{M}_{X/Y,\bar{x}}^{\gp},\mathcal{O}_{X,\bar{x}}^{\times})=0$, then, étale locally around $x$ and $y$, there exists a fine chart $(a,\theta,b)$ which is neat at $x$.
            In particular, if the order of the torsion subgroup of $\mathcal{M}_{X/Y,\bar{x}}^{\gp}$ is invertible in $k(\bar{x})$, then, étale locally around $x$ and $y$, there exists a fine chart $(a,\theta,b) $which is neat at $x$.
    \end{itemize}
\end{prop}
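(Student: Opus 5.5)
We follow the standard strategy from Kato and Ogus: construct the chart from a splitting of the group-level extension, then use finite generation to adjust it. In both parts the key point is to get a homomorphism of groups with the prescribed image in $\overline{\mathcal{M}}^{\gp}$, and then pass to a finitely generated submonoid.

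For (1), choose a splitting $s\colon \overline{\mathcal{M}}_{Y,\bar{y}}^{\gp}\to \mathcal{M}_{Y,\bar{y}}^{\gp}$. Put $P:=$ the preimage of $\overline{\mathcal{M}}_{Y,\bar{y}}$ under the quotient map, viewed as a submonoid of $\overline{\mathcal{M}}_{Y,\bar{y}}^{\gp}$ via $s$. Concretely $P\cong \overline{\mathcal{M}}_{Y,\bar{y}}$, which is fine because $Y$ is fine. Then $s$ maps $P$ into $\mathcal{M}_{Y,\bar{y}}$, since an element of $\mathcal{M}^{\gp}$ whose image lies in $\overline{\mathcal{M}}$ lies in $\mathcal{M}$, as $\mathcal{M}$ is a union of $\mathcal{O}^{\times}$-cosets.

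Because $P$ is finitely generated, the map $P\to\mathcal{M}_{Y,\bar{y}}$ extends to an étale neighborhood of $\bar y$. The induced map $P\to \overline{\mathcal{M}}_{Y,\bar{y}}$ is an isomorphism. By the standard criterion (a homomorphism from a fine monoid to $\mathcal{M}$ that is a chart at the stalk, i.e.\ $P\to\overline{\mathcal{M}}_{\bar y}$ surjective with the correct kernel, is a chart in a neighborhood), this map is a chart near $y$. Since $P^{\gp}=\overline{\mathcal{M}}_{Y,\bar{y}}^{\gp}$, the chart is neat at $y$.

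For the "in particular" statement, write $\overline{\mathcal{M}}^{\gp}$ as a free part plus a torsion part $T$ of order $n$ invertible in $k(\bar y)$. The free part always splits. For $T$, the obstruction is $\ext^1(T,\mathcal{O}^{\times}_{Y,\bar y})$, which is controlled by $\mathcal{O}^{\times}/(\mathcal{O}^{\times})^n$. This vanishes because $\mathcal{O}_{Y,\bar y}$ is strictly henselian with $n$ invertible, so every unit has an $n$-th root by Hensel's lemma.

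For (2), use the same idea relatively. The sequence
\[
0\to \mathcal{O}^{\times}_{X,\bar x}\to (\mathcal{M}_{X,\bar x}^{\gp}\ \text{mod}\ \text{image of}\ P^{\gp})\to \mathcal{M}^{\gp}_{X/Y,\bar x}\to 0
\]
splits by the $\ext^1$ hypothesis. Combining $P^{\gp}\to \mathcal{M}_{X,\bar x}^{\gp}$ with a lift of this splitting gives a group $G:=P^{\gp}\oplus \mathcal{M}^{\gp}_{X/Y,\bar x}$ and a homomorphism $G\to \mathcal{M}^{\gp}_{X,\bar x}$ extending the given map on $P^{\gp}$. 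We will need to check, via the exact sequence $\overline{\mathcal{M}}^{\gp}_{Y}\to\overline{\mathcal{M}}^{\gp}_X\to\mathcal{M}^{\gp}_{X/Y}\to0$, that $G\to\overline{\mathcal{M}}^{\gp}_{X,\bar x}$ is surjective. Set $Q:=$ the preimage of $\mathcal{M}_{X,\bar x}$ in $G$.

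The main obstacle is that $Q$ need not be finitely generated when the kernel of $G\to\overline{\mathcal{M}}^{\gp}_{X,\bar x}$ is nontrivial. To handle this, we replace $Q$ by the submonoid generated by $\theta(P)$, finitely many elements of $Q$ mapping onto generators of $\overline{\mathcal{M}}_{X,\bar x}$, and the finitely many elements needed so that $Q^{\gp}=G$. This submonoid is still fine, and we still have $Q\to\overline{\mathcal{M}}_{X,\bar x}$ surjective, which is the chart criterion at $\bar x$.

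Then $\theta^{\gp}\colon P^{\gp}\to G$ is injective and $\cok\theta^{\gp}=\mathcal{M}^{\gp}_{X/Y,\bar x}$, so the chart is neat at $x$. Spreading out over an étale neighborhood compatibly with $a$ gives the chart $(a,\theta,b)$. The "in particular" statement follows as in (1): the $\ext^1$ group is killed by the invertible torsion order $n$ on a strictly henselian local ring.
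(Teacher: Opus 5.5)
The paper gives no proof of this proposition (it only cites Ogus), so I assess your argument on its own. Part (1), including the Hensel argument for the torsion case, is fine. The gap is in (2). There you lift the splitting to a homomorphism $C\to\mathcal{M}^{\gp}_{X,\bar x}$, with $C:=\mathcal{M}^{\gp}_{X/Y,\bar x}$, and take $G=P^{\gp}\oplus C$. When $C$ has torsion, such a lift need not exist, and a neat chart in general cannot have split $Q^{\gp}$.

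Here is an example. Take $Y=X=\mathsf{A}_{\mathbb{N}}$ over a field of characteristic $\neq 2$, $f$ induced by $n\mapsto 2n$, $x$ the origin, and $P=\mathbb{N}$. Then $C=\mathbb{Z}/2$ and $\ext^{1}(C,\mathcal{O}^{\times}_{X,\bar x})=\mathcal{O}^{\times}_{X,\bar x}/(\mathcal{O}^{\times}_{X,\bar x})^{2}=0$. However, the torsion of $\mathcal{M}^{\gp}_{X,\bar x}\cong\mathcal{O}^{\times}_{X,\bar x}\times\mathbb{Z}$ lies in $\mathcal{O}^{\times}_{X,\bar x}$. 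So every homomorphism $\mathbb{Z}/2\to\mathcal{M}^{\gp}_{X,\bar x}$ dies in $C$, and the map $\cok(\theta^{\gp})\to\mathcal{M}^{\gp}_{X/Y,\bar x}$ induced by your $G$ is zero rather than an isomorphism. The actual neat chart is $\theta\colon\mathbb{N}\to\mathbb{N}$, $n\mapsto 2n$, whose $Q^{\gp}=\mathbb{Z}$ is a non-split extension of $C$ by $P^{\gp}$. This torsion case is exactly what the ``in particular'' clause needs, and also the paper's application, where only $p$-torsion is excluded.

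What is missing is a construction of $G$ as a possibly non-split extension. Write $\beta\colon P\to\mathcal{M}_{X,\bar x}$ for the induced map. Then $\mathcal{M}^{\gp}_{X,\bar x}$ is an extension of $C$ by $A:=\beta^{\gp}(P^{\gp})\cdot\mathcal{O}^{\times}_{X,\bar x}$, which is a quotient of $P^{\gp}\oplus\mathcal{O}^{\times}_{X,\bar x}$. Since $\ext^{2}_{\mathbb{Z}}=0$, its class lifts to $\ext^{1}(C,P^{\gp})\oplus\ext^{1}(C,\mathcal{O}^{\times}_{X,\bar x})=\ext^{1}(C,P^{\gp})$. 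This yields $0\to P^{\gp}\to G\to C\to 0$ together with a map $G\to\mathcal{M}^{\gp}_{X,\bar x}$ extending $\beta^{\gp}$ and inducing the identity on $C$. Relatedly, your displayed sequence is not exact on the left when $\beta^{\gp}(P^{\gp})\cap\mathcal{O}^{\times}_{X,\bar x}\neq 1$, for instance when some element of $P$ maps to a unit. The same $\ext^{2}_{\mathbb{Z}}=0$ argument is what actually produces a splitting there.

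There is a second, independent error at the end. Surjectivity of $Q\to\overline{\mathcal{M}}_{X,\bar x}$ is not the chart criterion. You need $Q$ modulo the preimage of $\mathcal{O}^{\times}_{X,\bar x}$ to be isomorphic to $\overline{\mathcal{M}}_{X,\bar x}$, as you correctly said in (1). Your shrunken submonoid can violate this. Take $Y=\mathsf{A}_{\mathbb{N}^{2}}$, $X=\mathsf{A}_{\mathbb{N}}$, and $f$ given by $(a,b)\mapsto a+b$. Then $C=0$ and $G=\mathbb{Z}^{2}$, and your recipe permits $Q=\mathbb{N}^{2}$. This surjects onto $\overline{\mathcal{M}}_{X,\bar x}=\mathbb{N}$ but is not a chart for $X$.

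The shrinking is also unnecessary, because the ``main obstacle'' does not exist. The full preimage $Q$ of $\mathcal{M}_{X,\bar x}$ in $G$ is already fine: it is generated by the finitely generated group $\ker(G\to\overline{\mathcal{M}}^{\gp}_{X,\bar x})$ together with lifts of generators of $\overline{\mathcal{M}}_{X,\bar x}$. For this $Q$, the preimage of $\mathcal{O}^{\times}_{X,\bar x}$ is exactly that kernel, so the quotient is precisely $\overline{\mathcal{M}}_{X,\bar x}$, as the chart criterion requires.
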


\subsection{Relative integrality and saturatedness}

\indent

Here, we discuss fibered products of log schemes.

Let $f\colon X\to S$ and $g\colon Y\to S$ be morphisms of log schemes.
The fibered product $Z=X\times_{S}Y$ taken in the category of log schemes is given by the fibered product $\ul{Z}=\ul{X}\times_{\ul{S}}\ul{Y}$ of the underlying schemes with the log structure
\begin{equation*}
    p^{*}\mathcal{M}_{X}\oplus_{h^{*}\mathcal{M}_{S}}q^{*}\mathcal{M}_{Y}\to \mathcal{O}_{\ul{Z}}
\end{equation*}
induced by $p^{*}\mathcal{M}_{X}\to \mathcal{O}_{\ul{Z}}$ and $q^{*}\mathcal{M}_{Y}\to \mathcal{O}_{\ul{Z}}$, where $p$ (resp.\ $q$) denotes the projection $\ul{Z}\to \ul{X}$ (resp.\ $\ul{Z}\to \ul{Y}$) and $h=fp=qg$.

The problem is that the fibered product $Z=X\times_{S} Y$ taken in the category of log schemes is not integral (resp.\ saturated) in general even if $X$, $Y$, and $S$ are so.
Then we have to take the associated integral (resp.\ saturated) log scheme $Z^{\intl}$ (resp.\ $Z^{\satd}$) for the fibered product in the category of integral (resp.\ saturated) log schemes.
(For details, see \cite[Chapter \Rnum{3}, Section 2.1]{MR3838359}.)

However, if either $X\to S$ or $Y\to S$ is \textit{integral} (resp.\ \textit{saturated}), then these fibered products coincide.
In this subsection, we recall integrality and saturatedness of morphisms.
Our main references are \cite{MR3838359} and \cite{MR3907739}.

\subsubsection{Integral and saturated morphisms of monoids}

\indent

\begin{itemize}
    \item[(1)] A morphism $P\to Q$ of integral monoids is \textit{integral} if, for any morphism $P\to P'$, the pushout $Q\oplus_{P}P'$ taken in the category of monoids is integral.
    \item[(2)] For a positive integer $n$, an integral morphism $P\to Q$ of integral monoids is \textit{$n$-saturated} if, for any morphism $P\to P'$ such that $P'$ is $n$-saturated, the pushout $Q\oplus_{P}P'$ taken in the category of monoids is $n$-saturated.
    \item[(3)] An integral morphism $P\to Q$ of integral monoids is \textit{saturated} if it is $n$-saturated for every $n\geq 1$ (or equivalently, $p$-saturated for every prime number $p$).
\end{itemize}

\subsubsection{Integral and saturated morphisms of integral log schemes}

\indent

\begin{itemize}
    \item[(1)] A morphism $X\to Y$ of integral log schemes is \textit{integral} if, for any $x\in X$, the induced morphism $\overline{\mathcal{M}}_{Y,f(\bar{x})}\to \overline{\mathcal{M}}_{X,\bar{x}}$ is integral.
    \item[(2)] For a positive integer $n$, an integral morphism $X\to Y$ of integral log schemes is \textit{$n$-saturated} if, for any $x\in X$, the induced morphism $\overline{\mathcal{M}}_{Y,f(\bar{x})}\to \overline{\mathcal{M}}_{X,\bar{x}}$ is $n$-saturated.
    \item[(3)] An integral morphism $X\to Y$ of interal log schemes is \textit{saturated} if it is $n$-saturated for every $n\geq 1$ (or equivalently, $p$-saturated for every prime number $p$).
\end{itemize}

\begin{prop}[{cf.\ \cite[Chapter \Rnum{3}, Theorem 2.5.5]{MR3838359}}]\label{prop:n-sat}
    For a positive integer $n$, let $f\colon X\to Y$ be an $n$-saturated morphism, where $X$ is $n$-saturated.
    Then $\mathcal{M}_{X/Y}^{\gp}$ is $n$-torsion free.
\end{prop}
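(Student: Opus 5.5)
The question is stalkwise: fix a geometric point $\bar x$ with image $\bar y=f(\bar x)$, and set $P=\overline{\mathcal{M}}_{Y,\bar y}$, $Q=\overline{\mathcal{M}}_{X,\bar x}$, and $\theta\colon P\to Q$ for the induced morphism. By the parenthetical remark in the definition of neat charts, $\mathcal{M}_{X/Y,\bar x}^{\gp}=\cok(\theta^{\gp}\colon P^{\gp}\to Q^{\gp})$. It therefore suffices to show: if $\theta$ is an $n$-saturated morphism of fine sharp monoids and $Q$ is $n$-saturated, then every $z\in Q^{\gp}$ with $z^{n}\in\theta^{\gp}(P^{\gp})$ already lies in $\theta^{\gp}(P^{\gp})$.

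The tool is the pushout $Q'=Q\oplus_{P}P^{\gp}$ along the inclusion $P\to P^{\gp}$. The group $P^{\gp}$ is trivially $n$-saturated, so the hypothesis that $\theta$ is $n$-saturated (in particular integral) makes $Q'$ integral and $n$-saturated. Since $Q$ is integral, $Q'$ is the submonoid of $Q^{\gp}$ generated by $Q$ and $\theta^{\gp}(P^{\gp})$. This gives $Q'^{\gp}=Q^{\gp}$, and every element of $\theta^{\gp}(P^{\gp})$ becomes a unit of $Q'$.

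Now let $z\in Q^{\gp}$ with $z^{n}=\theta^{\gp}(w)$. Then $z^{n}$ is a unit of $Q'$, so $z^{n}\in Q'$ and also $z^{-n}\in Q'$. Applying $n$-saturatedness of $Q'$ to both gives $z\in Q'$ and $z^{-1}\in Q'$, so $z\in Q'^{\times}$. It remains to identify $Q'^{\times}$ with $\theta^{\gp}(P^{\gp})$. Write a unit as $u=q\,\theta(a)$ with inverse $q'\,\theta(a')$, where $q,q'\in Q$ and $a,a'\in P^{\gp}$. Then $qq'=\theta^{\gp}(b)$ for some $b\in P^{\gp}$.

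The main obstacle is deducing that $q\in\theta^{\gp}(P^{\gp})$ from $qq'\in\theta^{\gp}(P^{\gp})$. Integrality alone does not obviously give this. I would first try Kato's criterion for integral morphisms: if $\theta(p_1)q_1=\theta(p_2)q_2$, then there exist $r\in Q$ and $p_3,p_4\in P$ with $q_1=\theta(p_3)r$, $q_2=\theta(p_4)r$, and $p_1p_3=p_2p_4$.

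Write $b=b_1b_2^{-1}$ with $b_1,b_2\in P$, and apply the criterion to $\theta(b_2)\,qq'=\theta(b_1)\cdot 1$. This produces $r\in Q$ with $1=\theta(p_4)r$ for some $p_4\in P$. Sharpness of $Q$ forces $r=1$ and $\theta(p_4)=1$. The criterion then also gives $qq'=\theta(p_3)$ with $p_3\in P$.

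To finish, I would apply the criterion once more to the equation $\theta(1)\,q\cdot q'=\theta(p_3)\cdot 1$. Combined with sharpness, this should show that $q$ itself lies in the image of $\theta$. This step would need care; alternatively I would cite the source result \cite[Chapter \Rnum{3}, Theorem 2.5.5]{MR3838359}. Once $q\in\theta^{\gp}(P^{\gp})$, we get $z\in\theta^{\gp}(P^{\gp})$, so $\cok(\theta^{\gp})$ has no $n$-torsion. This holds at every geometric point, so $\mathcal{M}_{X/Y}^{\gp}$ is $n$-torsion free.
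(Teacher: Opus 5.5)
Everything up to $z\in(Q')^{\times}$ is fine. The step you label ``it remains'' is false, however: in general $(Q')^{\times}\neq\theta^{\gp}(P^{\gp})$, so more care with Kato's criterion cannot close the argument.

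Take the semistable chart $P=\mathbb{N}\to Q=\mathbb{N}^{2}$, $1\mapsto(1,1)$. It is integral and saturated, and $Q$ is saturated. Then $Q'=\mathbb{N}^{2}+\mathbb{Z}(1,1)=\mathbb{Z}^{2}$, so $(Q')^{\times}=\mathbb{Z}^{2}\supsetneq\theta^{\gp}(P^{\gp})=\mathbb{Z}(1,1)$. Concretely, $q=(1,0)$ and $q'=(0,1)$ satisfy $qq'=\theta(1)$, yet $q\notin\theta^{\gp}(P^{\gp})$.

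In general $(Q')^{\times}=F^{\gp}$, where $F$ is the face of $Q$ generated by $\theta(P)$. So your argument only shows $z\in F^{\gp}$, which says nothing whenever $\theta(P)$ lies in no proper face of $Q$. Your second use of the criterion, on $\theta(1)\cdot qq'=\theta(p_{3})\cdot 1$, just returns $qq'\in\theta(P)$ and cannot separate $q$ from $q'$. Citing Ogus's theorem at that point is citing the statement itself. A telling symptom is that you never use the hypothesis that $Q$ is $n$-saturated; only the $n$-saturatedness of $Q'$ enters.

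The missing idea, which is how the paper argues, is to push out along $\theta$ itself rather than along $P\to P^{\gp}$. Since $\theta$ and $Q$ are both $n$-saturated, $Q\oplus_{P}Q$ is $n$-saturated; this is where the unused hypothesis enters.

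The antidiagonal map $Q^{\gp}\to Q^{\gp}\oplus_{P^{\gp}}Q^{\gp}$, $y\mapsto(y,y^{-1})$, kills $\theta^{\gp}(P^{\gp})$. It induces an injection $\cok(\theta^{\gp})\hookrightarrow(Q\oplus_{P}Q)^{\gp}$, because $(y,y^{-1})$ is trivial only if it equals $(\theta^{\gp}(a),\theta^{\gp}(a)^{-1})$ for some $a\in P^{\gp}$.

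Let $\gamma$ be the image of an $n$-torsion class. Then $\gamma^{n}=\gamma^{-n}=1$ lie in $Q\oplus_{P}Q$, so $n$-saturatedness gives $\gamma^{\pm1}\in Q\oplus_{P}Q$; that is, $\gamma$ is a unit.

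Now replace $P$ by $P/\theta^{-1}(1)$. This keeps $P$ sharp, keeps the morphism $n$-saturated, and does not change $\cok(\theta^{\gp})$. Then $(Q\oplus_{P}Q)^{\times}=Q^{\times}\oplus_{P^{\times}}Q^{\times}=1$, so $\gamma=1$.

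In your $Q'$, everything in $F$ becomes invertible. The doubled pushout has no nontrivial units, and that is what lets the torsion be detected.
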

\begin{proof}
    The proof can be done in the similar way as the proof of \cite[Chapter \Rnum{1}, Proposition 4.8.11]{MR3838359}.
    Write $\theta\colon P=\overline{\mathcal{M}}_{Y,f(\bar{x})}\to Q=\overline{\mathcal{M}}_{X,\bar{x}}$.
    We prove that $\cok(\theta^{\gp})$ is $n$-torsion free.
    Note that $P/\theta^{-1}(1)\to Q$ is also $n$-saturated by \cite[Chapter \Rnum{1}, Proposition 4.8.8 (2)]{MR3838359} and that $P/\theta^{-1}(1)$ is still sharp.
    Hence, by replacing $P$ by $P/\theta^{-1}(1)$, we may assume that we have $\theta^{-1}(1)=1$.
    The morphism $Q^{\gp}\to Q^{\gp}\oplus_{P^{\gp}}Q^{\gp}$ sending $y$ to $(y,y^{-1})$ induces an injection $\cok(\theta^{\gp})\hookrightarrow Q^{\gp}\oplus_{P^{\gp}}Q^{\gp}$.
    Since $Q\oplus_{P}Q$ is $n$-saturated, we see that the $n$-torsion subgroup $\Gamma$ of $\cok(\theta^{\gp})$ is contained in $(Q\oplus_{P}Q)^{\times}$.
    Then the assertion follows since we have $(Q\oplus_{P}Q)^{\times}=Q^{\times}\oplus_{P^{\times}}Q^{\times}=1$ by \cite[Chapter \Rnum{1}, Proposition 4.1.5 (1)]{MR3838359}
\end{proof}

\begin{prop}
    Let $f\colon X\to Y$ be a morphism of integral log schemes and $n$ be a positive integer.
    \begin{itemize}
        \item[\textup{(1)}] \textup{\cite[Chapter \Rnum{3}, Proposition 2.5.3 (2)]{MR3838359}} If $f$ is integral, then, for any morphism $Y'\to Y$ of integral log schemes, the fibered product $X'=X\times_{Y}Y'$ taken in the category of log schemes is integral.
        \item[\textup{(2)}] \textup{\cite[Proposition \Rnum{2}.2.13 (1)]{MR3907739}} If $f$ is $n$-saturated (resp.\ saturated), then, for any morphism $Y'\to Y$ such that $Y'$ is $n$-saturated (resp.\ saturated), the fibered produt $X'=X\times_{Y}Y'$ taken in the category of log schemes is $n$-saturated (resp.\ saturated).
    \end{itemize}
\end{prop}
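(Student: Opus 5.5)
Both parts are cited from the literature, so the plan is to reduce each to a stalkwise statement about monoids and then use the definitions of integral and $n$-saturated morphisms of monoids recalled above. Take a geometric point $\bar z$ of $\ul{X'}=\ul{X}\times_{\ul{Y}}\ul{Y'}$, with images $\bar x\in X$, $\bar y'\in Y'$ and $\bar y\in Y$. The log structure on $X'$ is the one associated to the pushout $p^{*}\mathcal{M}_{X}\oplus_{h^{*}\mathcal{M}_{Y}}q^{*}\mathcal{M}_{Y'}$. Taking stalks commutes with pullback and with the associated-log-structure construction. Moreover, passing to associated log structures does not change the sharpened monoid: for a prelog structure $\mathcal{M}$ one has $\overline{\mathcal{M}^{\log}}=\mathcal{M}/\alpha^{-1}(\mathcal{O}^{\times})$. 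From this I would first establish the stalk formula
\begin{equation*}
\overline{\mathcal{M}}_{X',\bar z}\cong \overline{\mathcal{M}}_{X,\bar x}\oplus_{\overline{\mathcal{M}}_{Y,\bar y}}\overline{\mathcal{M}}_{Y',\bar y'}\big/\text{(units)},
\end{equation*}
that is, the sharpening of the monoid pushout of the sharpened stalks. Integrality and $n$-saturatedness of a log scheme are conditions on these sharpened stalks, and both properties survive sharpening (quotients of integral monoids by unit subgroups stay integral and $n$-saturated). So it suffices to show that the monoid pushout $Q\oplus_{P}P'$ is integral, resp.\ $n$-saturated, where $P=\overline{\mathcal{M}}_{Y,\bar y}$, $Q=\overline{\mathcal{M}}_{X,\bar x}$ and $P'=\overline{\mathcal{M}}_{Y',\bar y'}$.

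For (1), the hypothesis says $P\to Q$ is integral, and by the definition of integral morphisms of monoids $Q\oplus_P P'$ is then integral for every $P\to P'$; here $P'$ is integral because $Y'$ is. For (2), $P\to Q$ is $n$-saturated and $P'$ is $n$-saturated because $Y'$ is, so the definition directly gives that $Q\oplus_P P'$ is $n$-saturated. The saturated case follows by applying this for every $n$.

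The main obstacle is the stalk formula, and in particular the integrality of the stalk before sharpening. The relevant point is that $\mathcal{M}_{X',\bar z}$ is a pushout of the pre-sharpened stalks together with $\mathcal{O}^{\times}$, and it is integral if and only if its sharpening is integral (for monoids of the form $\mathcal{O}^\times\oplus_{\alpha^{-1}\mathcal{O}^\times}M$). I would handle this by citing Ogus's argument \cite[Chapter \Rnum{3}, Proposition 2.5.3]{MR3838359}, which shows that integrality of a morphism of monoids is insensitive to passing to sharpenings and to pushouts along groups. Checking that the two pushouts agree after sharpening should be a formal verification with universal properties.
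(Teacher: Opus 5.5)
The paper gives no argument for this proposition; it only cites Ogus and Tsuji. Your unpacking therefore is the whole proof. Part (2) is fine, provided (1) is known. The sharpened stalk of $X'$ at $\bar z$ is exactly $\overline{\mathcal M}_{X,\bar x}\oplus_{\overline{\mathcal M}_{Y,\bar y}}\overline{\mathcal M}_{Y',\bar y'}$: the extra quotient by units is trivial, because $\mathcal O_{X,\bar x}\to\mathcal O_{X',\bar z}$ and $\mathcal O_{Y',\bar y'}\to\mathcal O_{X',\bar z}$ are local, so $\alpha^{-1}(\mathcal O_{X',\bar z}^{\times})$ is generated by the two unit groups. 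Moreover, $n$-saturatedness of a log scheme is a condition on sharpened stalks. Note that integrality of $X'$ in (2) has to come from (1).

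The gap is in (1). In the paper's definition, integrality of $X'$ is a condition on the unsharpened stalks $\mathcal M_{X',\bar z}$. Your bridge, that a monoid $\mathcal O^{\times}\oplus_{\alpha^{-1}\mathcal O^{\times}}M$ is integral iff its sharpening is, is false, even for log fibered products of integral log schemes. Here is a counterexample:
\begin{itemize}
\item Take $k$ of characteristic $\neq 2$ and $Y=\Spec k[x,y]$ with chart $\mathbb N^{2}$.
\item Take $X=\Spec k[s]$ and $Y'=\Spec k[t]$, each with chart $\mathbb N$.
\item Let $f$ send $e_1,e_2\mapsto s$, and let $Y'\to Y$ send $e_1\mapsto t$, $e_2\mapsto(-1)\cdot t$.
\end{itemize}
Then $\ul{X'}=\Spec k$, and the sharpened stalk is $\mathbb N\oplus_{\mathbb N^{2}}\mathbb N\cong\mathbb N$, which is integral. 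Yet in $\mathcal M_{X',\bar z}$ both $t$ and $(-1)\cdot t$ equal the image of $s$, with $-1\neq 1$, so $X'$ is not integral. Here $f$ is not integral, so this does not contradict the proposition. It does show that integrality of the sharpened pushout cannot by itself give integrality of $X'$. Your opening reduction (``it suffices to show that $Q\oplus_P P'$ is integral'') is therefore invalid for (1).

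The hypothesis has to be used on unsharpened stalks, in three steps.
\begin{enumerate}
\item Show that $\theta\colon\mathcal M_{Y,\bar y}\to\mathcal M_{X,\bar x}$ is integral because $\bar\theta$ is. Use Kato's criterion: given $\theta(a_1)b_1=\theta(a_2)b_2$, lift the data $\bar a_3,\bar a_4,\bar b$ supplied by $\bar\theta$. After absorbing units you get $b_1=\theta(a_3)b$, $b_2=\theta(a_4)bw$ and $a_1a_3=a_2a_4$ with $w$ a unit. Cancellation in $\mathcal M_{X,\bar x}$ then forces $w=1$.
\item Conclude that the prelog stalk $\mathcal M_{X,\bar x}\oplus_{\mathcal M_{Y,\bar y}}\mathcal M_{Y',\bar y'}$ is integral.
\item Finish by noting that the log structure associated to an integral prelog structure is integral.
\end{enumerate}
You do mention these ingredients at the end, but you attribute them to Ogus, Chapter III, Proposition 2.5.3, which is the statement being proved. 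The ``iff'' you offer in their place is wrong, so as written your proof of (1) is incomplete.
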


\subsection{Olsson's log stacks and properties of morphisms of log schemes}

\indent

For a fine log scheme $X$, Olsson defined the stack $\bLog_{X}$ whose objects are fine log schemes over $X$ and whose morphisms are strict morphisms over $X$ (\cite{MR2032986}).
The stack $\bLog_{X}$ parametrizes fine log schemes over $X$.
For every scheme $f\colon Y_{0}\to \ul{X}$ over the underlying scheme $\ul{X}$ of $X$, we can endow $Y_{0}$ with the fine log structure $f^{*}\mathcal{M}_{X}$, which yields a canonical morphism $\ul{X}\to \bLog_{X}$.
Note that a morphism $f\colon X\to Y$ of fine log schemes defines a natural morphism $\bLog_{f}\colon \bLog_{X}\to \bLog_{Y}$.

\begin{dfn}[{\cite[Definition 4.1]{MR2032986}}]
    Let $f\colon X\to Y$ be a morphism of fine log schemes and $\mathcal{P}$ be a property of representable morphisms of algebraic stacks.
    \begin{itemize}
        \item[(1)] We say that $f$ is \textit{log $\mathcal{P}$} if $\bLog_{f}\colon \bLog_{X}\to \bLog_{Y}$ is $\mathcal{P}$.
        \item[(2)] We say that $f$ is \textit{weakly log $\mathcal{P}$} if the composition $w\bLog_{f}\colon \ul{X}\stackrel{\can}{\to} \bLog_{X}\stackrel{\bLog_{f}}{\to} \bLog_{Y}$ is $\mathcal{P}$.
    \end{itemize}
\end{dfn}

\begin{thm}[{\cite[Theorem 4.6]{MR2032986}}, {\cite[Theorem 5.1.3]{MR4211089}}]\label{thm:logP}
    Let $f\colon X\to Y$ be a morphism of fine log schemes and $\mathcal{P}$ be one of the properties $\{\text{\textup{smooth, étale, flat, regular}}\}$.
    Then $f$ is log $\mathcal{P}$ if and only if it is weakly log $\mathcal{P}$.
\end{thm}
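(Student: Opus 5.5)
The plan is to compare $\bLog_{f}$ with its restriction along the canonical morphism $\ul{X}\to \bLog_{X}$, using that all four properties are local on the source for the smooth topology. We assume (this is also used in \cite{MR2032986}) that $\ul{X}\to \bLog_{X}$ is an open immersion: the locus where a morphism of fine log structures is an isomorphism is open. We also use that for a strict morphism $g\colon U\to X$ one has $\can_{X}\circ \ul{g}=\bLog_{g}\circ \can_{U}$, with $\bLog_{g}$ the base change of $\bLog$ along the strict map.

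First, the direction ``log $\mathcal{P}$ $\Rightarrow$ weakly log $\mathcal{P}$''. Here $w\bLog_{f}=\bLog_{f}\circ \can$ is a composition of an open immersion with a $\mathcal{P}$-morphism. An open immersion is smooth, étale, flat and regular, and each class is stable under composition, so this direction is immediate.

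For the converse, assume $w\bLog_{f}$ is $\mathcal{P}$. It suffices to find a family of fine log schemes $g_{i}\colon U_{i}\to X$ such that the morphisms $\ul{U_{i}}\to \bLog_{X}$ sending $U_{i}$ to $(U_{i},g_{i})$ are smooth and jointly surjective, and such that each composite $\ul{U_{i}}\to \bLog_{Y}$ is $\mathcal{P}$. For smooth, flat and regular, $\mathcal{P}$ then descends to $\bLog_{f}$. For étale, I would instead show that $\bLog_{f}$ is smooth and unramified, arguing in the same way for the relative differentials.

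The choice of $U_{i}$ is the core of the argument. Étale locally, take a fine chart $X\to \mathsf{A}_{P}$ and injective morphisms $P\to Q$ of fine monoids. Set $U=X\times_{\mathsf{A}_{P}}\mathsf{A}_{Q}$ (fibered product in fine log schemes) with its log structure from $Q$. Then $U\to \mathsf{A}_{Q}$ is strict, and the composite $\ul{U}\to \bLog_{Y}$ factors as
\begin{equation*}
\ul{U}\to \ul{X}\times_{\bLog_{Y}}\bigl(\text{the stack } \bLog_{\mathsf{A}_{P}\to\mathsf{A}_{Q}}\text{-type data}\bigr)\to \bLog_{Y}.
\end{equation*}
Concretely, I would show that $\ul{U}\to \bLog_{Y}$ is the base change of $w\bLog_{f}\colon \ul{X}\to \bLog_{Y}$ along a smooth morphism of algebraic stacks built only from monoid data. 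These are the stacks $[\mathsf{A}_{Q}/\mathsf{T}_{Q}]$ of Olsson, and the morphisms between them are smooth since they are toric. This reduces the problem to the classes $\mathcal{P}$ being stable under base change and composition with smooth morphisms. For regularity this holds by the Molcho–Temkin convention that regular morphisms of stacks are defined smooth-locally. Olsson's description of $\bLog_{X}$ as covered by such $\ul{U}$ (his charts for $\bLog$) supplies the joint surjectivity and smoothness of $\ul{U}\to \bLog_{X}$.

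I expect the main obstacle to be precisely this identification of $\ul{U}\to \bLog_{Y}$ with a base change of $w\bLog_{f}$ by a smooth toric morphism. It requires knowing that $\bLog_{X}\times_{\bLog_{Y}}T$ can be computed via fibered products of fine log schemes, and controlling the difference between the naive and fine fibered products. I would try first to reduce, via étale localization and Proposition~\ref{prop:exist_neat}, to the case where the charts are neat, so that the relevant monoid morphisms are injective on groupifications. I would then verify the cartesian square on $T$-points directly from the definition of $\bLog$.
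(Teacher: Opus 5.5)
The paper gives no proof of this statement; it is quoted from Olsson and Molcho--Temkin, so your argument has to stand on its own. Your easy direction (composing $\bLog_{f}$ with the open immersion $\can\colon\ul{X}\to\bLog_{X}$) is fine. So is the descent framework: checking $\mathcal{P}$ on a smooth cover $\ul{U}\to\bLog_{X}$ when $\mathcal{P}$ is smooth, flat or regular.

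\textbf{The gap is the step you call the core.} The reason you give for it is false: the morphisms $[\mathsf{A}_{Q}/\mathsf{T}_{Q}]\to[\mathsf{A}_{P}/\mathsf{T}_{P}]$ induced by $\theta\colon P\to Q$ are not smooth in general. Take $\theta\colon\mathbb{N}^{2}\to\mathbb{N}^{2}$, $(a,b)\mapsto(a,a+b)$. Then $\theta^{\gp}$ is an isomorphism, so after base change along $\mathsf{A}_{P}\to[\mathsf{A}_{P}/\mathsf{T}_{P}]$ this morphism becomes $\mathsf{A}_{\theta}\colon(u,v)\mapsto(uv,v)$. That map is not even flat, although $\mathsf{A}_{\theta}$ is log étale. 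Toric morphisms are at best log smooth, which is the very notion under study, so the appeal is circular.

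The proposed identification also has a structural problem. Suppose $\ul{U}\to\mathcal{W}$ is the base change of $w\bLog_{f}$ along some $\mathcal{W}\to\bLog_{Y}$, and you then compose with that same $\mathcal{W}\to\bLog_{Y}$. The result is $\ul{U}\to\ul{X}\to\bLog_{Y}$, which classifies the log structure pulled back from $X$, not $\mathcal{M}_{U}$. Since $U\to X$ is not strict, this is the wrong morphism.

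\textbf{What is missing is a twist by the torus actions,} in the spirit of $m_{\theta}$ in the proof of Proposition \ref{prop:flat_neat_noeth}. Here is how it goes when $Y$ has trivial log structure and $X$ has a chart $P$.
\begin{itemize}
\item Using Olsson's étale morphisms $[\mathsf{A}_{P}/\mathsf{T}_{P}]\to\bLog_{Y}$, the map $w\bLog_{f}$ has $\mathcal{P}$ if and only if the action map $\ul{X}\times\mathsf{T}_{P}\to\mathsf{A}_{P}$ does.
\item For $U=X\times_{\mathsf{A}_{P}}\mathsf{A}_{Q}$, the action map $\ul{U}\times\mathsf{T}_{Q}\to\mathsf{A}_{Q}$ factors in two steps. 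First comes a torsor under $\ker(\mathsf{T}_{Q}\to\mathsf{T}_{P})$. Then comes the base change of $\ul{X}\times\mathsf{T}_{P}\to\mathsf{A}_{P}$ along the non-smooth map $\mathsf{A}_{Q}\to\mathsf{A}_{P}$.
\item The base change is harmless, since $\mathcal{P}$ is stable under arbitrary base change.
\item Smoothness comes from the torus homomorphism, not from the toric stacks. This requires $\theta^{\gp}$ injective and the torsion of $\cok(\theta^{\gp})$ invertible.
\end{itemize}
For general $Y$ you additionally need a chart for $f$ and the matching local description of $\bLog_{Y}$. None of this is in your proposal.

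\textbf{Secondary issues.}
\begin{itemize}
\item A merely injective $\theta$ does not make $\ul{U}\to\bLog_{X}$ smooth. For example, $\mathbb{N}\xrightarrow{\cdot p}\mathbb{N}$ over $\mathbb{F}_{p}$ gives a log flat but not log smooth $\mathsf{A}_{Q}\to\mathsf{A}_{P}$.
\item Proposition \ref{prop:exist_neat} yields neat charts étale locally only under a torsion hypothesis the statement does not assume. This is exactly the $p$-torsion situation this paper is concerned with, so that reduction would have to be done fppf locally.
\item For $\mathcal{P}$ étale, unramifiedness is not smooth-local on the source. You would have to show that $\ul{U}\to\bLog_{X}$ and $\ul{U}\to\bLog_{Y}$ have the same relative dimension, which again rests on the missing identification.
\end{itemize}
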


\begin{rem}

    \indent

    \begin{itemize}
        \item[(1)] For smoothness, étaleness, and flatness, the assertions of Theorem \ref{thm:logP} are proved in \cite[Theorem 4.6]{MR2032986}.
            For regularity, the assertion is proved in \cite[Theorem 5.1.3]{MR4211089}.
        \item[(2)] In \cite[Theorem 4.6]{MR2032986}, it is also proved that, for a morphism $f\colon X\to Y$ of fine log schemes, log smoothness (resp.\ log étaleness, resp.\ log flatness) is equivalent to log smoothness (resp.\ log étaleness, resp.\ log flatness) in the original sense of Kato described in \cite{MR1463703}, \cite{MR4342362}.
    \end{itemize}
\end{rem}
In this paper, we mainly consider log flatness and log regularity.

\subsubsection{Log flatness and charts}

\begin{prop}[{\cite[Theorem 4.6 (\rnum{4})]{MR2032986}}]\label{prop:logflat}
    Let $f\colon X\to Y$ be a morphism of fine log schemes.
    Then $f$ is log flat if and only if, fppf locally, there exists a fine chart
    \begin{equation*}
        (a\colon Y\to \mathsf{A}_{P},\theta\colon P\to Q,b\colon X\to \mathsf{A}_{Q})
    \end{equation*}
    such that $\theta$ is injective and that the underlying morphism $\ul{X}\to \ul{Y}\times_{\ul{\mathsf{A}_{P}}}\ul{\mathsf{A}_{Q}}$ is flat (that is the original definition of log flatness due to Kato).
\end{prop}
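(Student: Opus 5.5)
The plan is to translate between Kato's chart-theoretic condition and Olsson's stack-theoretic one through Theorem \ref{thm:logP}: $f$ is log flat if and only if $w\bLog_{f}\colon \ul{X}\to \bLog_{Y}$ is flat. The bridge is Olsson's local description of $\bLog_{Y}$. Given a fine chart $a\colon Y\to \mathsf{A}_{P}$ and a morphism $\theta\colon P\to Q$ of fine monoids with $\theta^{\gp}$ injective, put $V:=\ul{Y}\times_{\ul{\mathsf{A}_{P}}}\ul{\mathsf{A}_{Q}}$ and $D:=\Spec(\mathbb{Z}[\cok(\theta^{\gp})])$. The torus $D$ acts on $V$. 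The tautological log structure on $V$, pulled back from $\mathsf{A}_{Q}$, gives a morphism $V\to \bLog_{Y}$, which factors as
\begin{equation*}
    V\to [V/D]\to \bLog_{Y}.
\end{equation*}
I would first recall (or reprove from \cite{MR2032986}) two facts: the second arrow is étale, and such maps for varying $(\theta,Q)$ cover $\bLog_{Y}$. The first arrow is a $D$-torsor, hence fppf, since $D$ is flat of finite presentation.

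For the direction ``$\Leftarrow$'', take a chart $(a,\theta,b)$ with $\theta$ injective. Since $P$ and $Q$ are integral, $\theta^{\gp}$ is then injective as well, so the description above applies. By strictness of $b$, the morphism $w\bLog_{f}$ factors as
\begin{equation*}
    \ul{X}\to V\to [V/D]\to \bLog_{Y}.
\end{equation*}
The first map is flat by hypothesis, the second is fppf and the third is étale, so the composite is flat. Flatness of a morphism to an algebraic stack is fppf local on the source, so this holds globally, and Theorem \ref{thm:logP} gives that $f$ is log flat.

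For ``$\Rightarrow$'', fix $x\in X$ and a fine chart $P$ for $Y$ near $f(x)$. The first task is to produce, fppf locally, a chart $(a,\theta,b)$ that is neat at $x$. As in Proposition \ref{prop:exist_neat}(2), the only obstruction is the class in $\ext^{1}(\mathcal{M}_{X/Y,\bar{x}}^{\gp},\mathcal{O}_{X,\bar{x}}^{\times})$. The torsion part of this obstruction is killed by extracting roots of units, which is an fppf (not étale) cover even when the torsion order is divisible by $p$. This is exactly why the statement is fppf local. Neatness gives $\theta^{\gp}$ injective, hence $\theta$ injective. It then remains to show that $b'\colon \ul{X}\to V$ is flat. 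We know that $\ul{X}\to [V/D]$ is flat, since it is $w\bLog_{f}$ followed by the inverse of an étale map on the relevant open. Base change along $V\to[V/D]$ shows that the action map
\begin{equation*}
    \ul{X}\times D\to V,\qquad (x,d)\mapsto d\cdot b'(x)
\end{equation*}
is flat.

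The main obstacle is descending this flatness from $\ul{X}\times D$ to $b'$ itself, that is, restricting to $d=1$. My first attempt is to use the automorphism $(v,d)\mapsto(dv,d)$ of $V\times D$ to rewrite the action map as $b'\times\id_{D}$ followed by the projection $V\times D\to V$. I would then try to use neatness at $x$ to show that the stabilizer is trivial near $x$, so that $\ul{X}\to[V/D]$ lifts through the torsor in a way compatible with $b'$. Failing that, I would apply the fibrewise flatness criterion over $D$ to $b'\times\id_{D}$, using that $\ul{X}\times D$ is flat over $D$. If neither works directly, I would fall back on Olsson's argument in \cite[Theorem 4.6]{MR2032986}, which replaces $Q$ by a suitable localization so that the orbit map becomes flat.
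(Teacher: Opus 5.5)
The paper gives no proof of this statement; it cites Olsson. Your ``$\Leftarrow$'' direction is fine. If the chart exists only after an fppf cover $Y'\to Y$ as well, use $\bLog_{Y'}\simeq \bLog_{Y}\times_{\ul{Y}}\ul{Y'}$.

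The gap is in ``$\Rightarrow$''. You correctly obtain flatness of the action map $m\colon \ul{X}\times D\to V$, $(x,d)\mapsto d\cdot b'(x)$. But your plan to descend this to $b'$ itself (restricting to $d=1$) fails along both routes you propose.

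\emph{The stabilizer is not trivial.} Neatness does the opposite. Every $q$ in the face $F=b_{\bar{x}}^{-1}(\mathcal{M}_{X,\bar{x}}^{\times})$ dies in $\overline{\mathcal{M}}_{X,\bar{x}}$. Since $\cok(\theta^{\gp})\to \mathcal{M}_{X/Y,\bar{x}}^{\gp}$ is injective, $q$ also dies in $\cok(\theta^{\gp})$. Hence the stabilizer of $b'(x)$, namely $\Spec(\mathbb{Z}[\cok(\theta^{\gp})/\mathrm{im}(F^{\gp})])$, is all of $D$.

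\emph{The fibre criterion over $D$ is circular.} The fibres of $b'\times\id_{D}$ over points of $D$ are just base changes of $b'$.

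What does work is the fibre criterion over $V$, with structure maps $m$ and $m_{Y}$. Because $D$ acts trivially on the closed stratum $\ul{Y}\times_{\ul{\mathsf{A}_{P}}}\Spec(\mathbb{Z}[Q]/\mathbb{Z}[Q\setminus F])$ through $b'(x)$, the fibre map over a point $v$ of that stratum is $b'^{-1}(v)\times D\to \{v\}\times D$, which is flat. But that is exactly Kato's theorem (Theorem \ref{thm:flat_neat}, Proposition \ref{prop:flat_neat_noeth}). EGA IV (11.3.10) needs locally Noetherian or finite-presentation hypotheses, and this proposition assumes neither. So your route proves at most a weaker statement.

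\emph{The missing observation: no descent to $d=1$ is needed,} because the statement only asks for a chart fppf locally. Proceed as follows.
\begin{itemize}
\item[(a)] Give the fppf cover $\ul{X}\times D\to \ul{X}$ the pulled-back log structure and the twisted chart $q\mapsto b(q)\,e_{\bar{q}}$. Here $e_{\bar{q}}\in \Gamma(D,\mathcal{O}_{D}^{\times})$ is the character attached to the image $\bar{q}$ of $q$ in $\cok(\theta^{\gp})$.
\item[(b)] Twisting a chart by a homomorphism $Q\to \Gamma(\mathcal{O}^{\times})$ again gives a chart.
\item[(c)] The twisted chart is still compatible with $a$, because $\bar{q}=1$ for $q\in\theta(P)$.
\item[(d)] The induced underlying map $\ul{X}\times D\to V$ is exactly your flat map $m$. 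Equivalently, you are pulling back the $D$-torsor $V\to[V/D]$ along $\ul{X}\to[V/D]$.
\end{itemize}
With $\theta$ injective coming from your neat chart, this closes ``$\Rightarrow$''. You could also dispense with neat charts entirely: factor $w\bLog_{f}$ étale locally through some $[V_{\theta}/D_{\theta}]$ with $\theta$ injective, using that these cover $\bLog_{Y}$, and apply the same torsor pullback.
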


Due to the original work of Kato, neat charts and log flatness are related as follows.
(For references, see \cite[Theorem 4.15]{MR2032986} or \cite[Chapter \Rnum{4}, Theorem 4.1.7]{MR3838359}.)
\begin{thm}[{[Kato's unpublished work]}]\label{thm:flat_neat}
    Let $f\colon X\to Y$ be a log flat morphism of fine log schemes.
    Assume that the underlying morphism $\ul{f}\colon \ul{X}\to \ul{Y}$ is locally of finite presentation.
    Then, for any chart
        \begin{equation*}
        (a\colon Y\to \mathsf{A}_{P},\theta\colon P\to Q,b\colon X\to \mathsf{A}_{Q})
    \end{equation*}
    which is neat at $x$, the underlying morphism $\ul{X}\to \ul{Y}\times_{\ul{\mathsf{A}_{P}}} \ul{\mathsf{A}_{Q}}$ is flat at $x$ (or equivalently, in some neighborhood of x).
\end{thm}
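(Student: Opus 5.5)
The plan is to reduce to Olsson's stack-theoretic description of log flatness (Theorem~\ref{thm:logP}) and then use neatness to recover flatness for the particular chart. Fix a chart $(a,\theta,b)$ neat at $x$, with $P\to Q$ fine and $\theta^{\gp}$ injective. Write $T_{P}=\Spec(\mathbb{Z}[P^{\gp}])$ and $T_{Q}=\Spec(\mathbb{Z}[Q^{\gp}])$, and set $Z:=\ul{Y}\times_{\ul{\mathsf{A}_{P}}}\ul{\mathsf{A}_{Q}}$ and $\mathcal{Z}:=\ul{Y}\times_{[\mathsf{A}_{P}/T_{P}]}[\mathsf{A}_{Q}/T_{Q}]$. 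The morphism $\ul{X}\to\bLog_{Y}$ factors as
\begin{equation*}
\ul{X}\stackrel{g}{\to} Z\stackrel{\pi}{\to}\mathcal{Z}\stackrel{\rho}{\to}\bLog_{Y},
\end{equation*}
where $g$ is the morphism in the statement. Olsson's analysis of $\bLog_{Y}$ shows that $\rho$ is étale, or at least étale near the image of $x$. I would prove this first, using that $\theta^{\gp}$ is injective and that the chart is neat at $x$, so that $\cok(\theta^{\gp})\cong\mathcal{M}_{X/Y,\bar{x}}^{\gp}$. Since $f$ is log flat, it is weakly log flat by Theorem~\ref{thm:logP}, so $\ul{X}\to\bLog_{Y}$ is flat. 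Together with the étaleness of $\rho$ at the relevant point, this gives that $\pi\circ g\colon\ul{X}\to\mathcal{Z}$ is flat at $x$.

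Next I pass from $\mathcal{Z}$ back to $Z$. The map $\pi$ is a torsor under the torus $T:=\Spec(\mathbb{Z}[\cok(\theta^{\gp})])$, because $[\mathsf{A}_{Q}/T_{Q}]\times_{[\mathsf{A}_{P}/T_{P}]}\mathsf{A}_{P}=[\mathsf{A}_{Q}/\ker(T_{Q}\to T_{P})]$. In particular $\pi$ is smooth and surjective, and its fibres have dimension $r:=\rank\cok(\theta^{\gp})$, with a finite diagonalizable part coming from the torsion. Flatness of $\pi\circ g$ does not by itself give flatness of $g$, so the finite presentation hypothesis is used here. Working over the local ring at $z=g(x)$, I would apply the fibrewise (local) criterion of flatness to $g$ relative to $\mathcal{Z}$. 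Both $\ul{X}$ and $Z$ are flat over $\mathcal{Z}$ at the relevant points, and everything is of finite presentation, so after the standard Noetherian approximation it suffices to show that $g$ is flat on the fibre over the image point $\xi\in\mathcal{Z}$.

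The fibre of $\pi$ over $\xi$ is a $T$-torsor $T_{\xi}$, and the fibre $\ul{X}_{\xi}\to T_{\xi}$ is what remains to analyse. This is where neatness is essential, and I expect it to be the main obstacle. Because the chart is neat at $x$, the composite $\ul{X}\to\mathsf{A}_{Q}\to[\mathsf{A}_{Q}/T_{Q}]$ induces an isomorphism on characteristic groups at $x$. From this I would argue that the $\cok(\theta^{\gp})$-part of the coordinates is determined, up to units, by the log structure of $X$, so that $\ul{X}_{\xi}$ maps to $T_{\xi}$ compatibly with a translation action of $T$ near $x$. I would first try to produce this action fppf locally: use the log flat chart from Proposition~\ref{prop:logflat} and compare it to the neat chart through the isomorphism $\cok(\theta^{\gp})\cong\mathcal{M}_{X/Y,\bar{x}}^{\gp}$. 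A morphism from a scheme to a torus that is equivariant for a torus action is flat, by generic flatness together with homogeneity. This gives flatness of $g$ on fibres at $x$, hence flatness of $g$ at $x$, and openness of the flat locus (finite presentation) gives flatness on a neighbourhood of $x$.

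If the equivariance argument becomes awkward, the fallback is a direct monoid computation. Replace $Q$ by its localization at the face $F$ over $x$; since the chart is neat, the map from this localized monoid to $\overline{\mathcal{M}}_{X,\bar{x}}$ identifies $Q^{\gp}/P^{\gp}$ with the relative characteristic. Then compare it with the fppf chart $P'\to Q'$ via $\mathbb{Z}[Q'^{\gp}]\otimes_{\mathbb{Z}[Q^{\gp}]}-$, using the standard fact that $\mathbb{Z}[Q]\to\mathbb{Z}[Q\oplus_{Q^{\gp}}H]$ is flat for a subgroup inclusion of group type.
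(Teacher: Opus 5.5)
Your first two steps are sound, granting Olsson's étaleness of $\rho$ (which needs only $\theta^{\gp}$ injective, not neatness). Weak log flatness makes $\pi\circ g$ flat. After base change along $\pi$, this is flatness of $\ul{m_\theta}\colon\ul X\times T\to Z$, with $T=\mathsf A^*_{Q/P}$, which is the first ingredient the paper takes from Ogus in the proof of Proposition~\ref{prop:flat_neat_noeth}. Applying the crit\`ere de platitude par fibres over $\mathcal Z$ is the paper's diagram with $\ul{m_\theta}$ and $\ul{m_Y}$. Two slips, both harmless:
\begin{itemize}
\item $\pi$ is only fppf, not smooth, when $\cok(\theta^{\gp})$ has torsion of order divisible by a residue characteristic; flatness is all you use.
\item Neatness of a chart for $f$ says $\cok(\theta^{\gp})\cong\mathcal M^{\gp}_{X/Y,\bar x}$, not that $Q^{\gp}\to\overline{\mathcal M}^{\gp}_{X,\bar x}$ is an isomorphism.
\end{itemize}
The genuine gap is the fibre step, which is the whole content of the theorem, and your plan for it does not work. $\ul X$ carries no $T$-action to be ``produced fppf locally''. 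Comparing with a log flat chart from Proposition~\ref{prop:logflat} is beside the point once $\rho$ is étale. Your fallback monoid computation is not an argument either.

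The missing idea is that neatness forces $T$ to fix $z=g(x)$.
\begin{itemize}
\item Localize $Q$ at the face $F=b_{\bar x}^{-1}(\mathcal M_{X,\bar x}^{\times})$. This only shrinks $Z$ to an open subscheme and preserves neatness, since $Q_F^{\gp}=Q^{\gp}$. Now $z$ lies in the closed stratum $Y_\theta^{+}=Y\times_{\mathsf A_P}\mathsf A_{(Q,Q^+)}$.
\item Every $u\in Q^{\times}$ maps to $1$ in $\overline{\mathcal M}_{X,\bar x}$. Injectivity of $\cok(\theta^{\gp})\to\mathcal M^{\gp}_{X/Y,\bar x}$ then forces $\bar u=1$ in $\cok(\theta^{\gp})$.
\item So the coaction $e^{u}\mapsto e^{u}\otimes e^{\bar u}$ on $\mathbb Z[Q]/\mathbb Z[Q^+]\cong\mathbb Z[Q^{\times}]$ is trivial, i.e.\ $T$ acts trivially on $Y_\theta^{+}$.
\item Hence the orbit map of $z$ is constant. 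Then $\ul X\times_{\mathcal Z}\Spec k(z)\cong\ul X_z\times_{k(z)}T_{k(z)}$, and the fibre map to $Z\times_{\mathcal Z}\Spec k(z)\cong T_{k(z)}$ is the projection, which is flat.
\end{itemize}
Now \cite[Th\'eor\`eme (11.3.10)]{MR217086} gives flatness of $g\times\id$ at the points over $x$. Faithful flatness of $T$ over $\Spec(\mathbb Z)$ descends this to flatness of $g$ at $x$. The paper, following Ogus, supplies the same input as flatness of $\ul{b_\theta^+}\colon\ul{X^+}\to\ul{Y_\theta^+}$ together with $x\in\ul{X^+}$, after arranging the chart to be local at $x$.

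This is exactly where neatness enters. Take $X=Y=\Spec k$ with trivial log structures and the non-neat chart $0\to\mathbb N$, $1\mapsto 1$. Then $T=\mathbb G_m$ acts simply transitively on the stratum through $z$, and the fibre map is a point inside $\mathbb G_m$, which is not flat.
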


In this paper, we need a variant of Theorem \ref{thm:flat_neat}.

\begin{prop}\label{prop:flat_neat_noeth}
    Let $f\colon X\to Y$ be a log flat morphism of fine log schemes.
    Assume that the underlying schemes $\ul{X}$ and $\ul{Y}$ are locally Noetherian.
    Then, for any fine chart
        \begin{equation*}
        (a\colon Y\to \mathsf{A}_{P},\theta\colon P\to Q,b\colon X\to \mathsf{A}_{Q})
    \end{equation*}
    which is neat at $x\in X$, the underlying morphism $\ul{X}\to \ul{Y}\times_{\ul{\mathsf{A}_{P}}} \ul{\mathsf{A}_{Q}}$ is flat at $x$.
\end{prop}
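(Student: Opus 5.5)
Write $Y':=\ul{Y}\times_{\ul{\mathsf{A}_{P}}}\ul{\mathsf{A}_{Q}}$ and $g\colon \ul{X}\to Y'$ for the induced morphism. The plan is to replace the finite presentation hypothesis of Theorem \ref{thm:flat_neat} by a Noetherian reduction: show that flatness of $g$ at $x$ can be tested after passing to a situation where $\ul{X}\to\ul{Y}$ is of finite type, or else redo Kato's argument directly with the local flatness criterion. Since $Q$ is finitely generated, $Y'$ is of finite type over $\ul{Y}$, hence locally Noetherian. All rings in sight are therefore Noetherian local once we localize at $x$ and its images.

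The first route I would try is to run Kato's proof of Theorem \ref{thm:flat_neat} and isolate where finite presentation is used. By Proposition \ref{prop:logflat}, fppf locally there is a chart $(a',\theta'\colon P'\to Q',b')$ with $\theta'$ injective and $\ul{X}\to \ul{Y}\times_{\mathsf{A}_{P'}}\mathsf{A}_{Q'}$ flat. Kato's comparison of this chart with the given neat chart goes through an intermediate chart built from $Q\oplus_{P}P'$ and $Q'$. The key step is that, after replacing by an fppf (Kummer-type) cover, the two charts differ by a morphism $\ul{Y}\times_{\mathsf{A}_P}\mathsf{A}_Q\leftarrow \ul{Y}\times_{\mathsf{A}_{P'}}\mathsf{A}_{Q'}$-type map which is flat. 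This uses neatness: $\cok(\theta^{\gp})\cong \mathcal{M}^{\gp}_{X/Y,\bar x}$ forces the comparison map on groups to be surjective with finite kernel, hence the induced map of monoid algebras is flat by base change along a torsor for a diagonalizable group. Finite presentation is only needed in Kato's argument to descend flatness through the fppf covers and to pass to a fiberwise criterion. Both descents work equally well for locally Noetherian schemes: fppf descent of flatness needs no finiteness, and the fiberwise criterion can be replaced by the local flatness criterion for Noetherian local rings.

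Concretely, the step I expect to be the main obstacle is the fibrewise flatness criterion (the ``critère de platitude par fibres''), which in EGA is stated under finite presentation. I would replace it with its Noetherian local version: if $A\to B\to C$ are local homomorphisms of Noetherian local rings, $C$ is flat over $A$, and $C\otimes_A k(A)$ is flat over $B\otimes_A k(A)$, then $C$ is flat over $B$ provided $B$ is flat over $A$. Here I would take $A=\mathcal{O}_{Y,\bar y}$, $B=\mathcal{O}_{Y',g(\bar x)}$ and $C=\mathcal{O}_{X,\bar x}$. Flatness of $B$ over $A$ is not automatic, so I would first reduce to the case $\theta$ integral. I would attempt this using neatness, or else work over the base change to the chart's fppf cover where Proposition \ref{prop:logflat} gives honest flatness. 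The closed fibre case is then a statement about strict log schemes over a log point, where neatness makes $g$ on fibres a base change of a flat map coming from Olsson's characterization (Theorem \ref{thm:logP}).

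Finally, openness of the flat locus is not needed, since the statement only asks for flatness at $x$. I would conclude by fppf descent of flatness at the point.
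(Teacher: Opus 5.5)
\textbf{There is a genuine gap.} Your top-level diagnosis agrees with the paper. Its proof reruns Kato's argument (as in Ogus, Chapter IV, Theorem 4.1.7) and replaces the finite-presentation form of the \emph{critère de platitude par fibres} by its locally Noetherian form (EGA IV, 11.3.10). The gap is in how you propose to apply that criterion.

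You take $A=\mathcal{O}_{Y,\bar y}\to B=\mathcal{O}_{Y',g(\bar x)}\to C=\mathcal{O}_{X,\bar x}$. This requires $C$ flat over $A$, i.e.\ $\ul{f}$ flat at $x$, and $B$ flat over $A$. Log flatness gives neither. Neatness cannot be used to reduce to $\theta$ integral, because the proposition concerns the \emph{given} chart, and neat charts need not be integral.

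Concretely, let $P=Q=\mathbb{N}^{2}$, $\theta(e_{1})=e_{1}$, $\theta(e_{2})=e_{1}+e_{2}$, $Y=\mathsf{A}_{P}$, $X=\mathsf{A}_{Q}$ (a chart of the log blow-up of the origin).
\begin{itemize}
\item $\theta^{\gp}$ is an isomorphism, so the chart is neat at every point.
\item $f$ is log étale, and $\ul{X}\to \ul{Y}\times_{\ul{\mathsf{A}_{P}}}\ul{\mathsf{A}_{Q}}$ is the identity.
\item But $\mathbb{Z}[s,t]\to\mathbb{Z}[u,v]$, $s\mapsto u$, $t\mapsto uv$, is not flat at the origin.
\end{itemize}
So neither $B$ nor $C$ is flat over $A$, and your version of the criterion cannot apply. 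The same example refutes the principle you invoke in the chart-comparison step: a monoid map that is surjective with finite kernel on groups need not induce a flat map of monoid algebras.

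Your fallback, passing to the fppf cover on which Proposition \ref{prop:logflat} applies, does not help either. It gives flatness over $\ul{Y}\times_{\ul{\mathsf{A}_{P'}}}\ul{\mathsf{A}_{Q'}}$ for a \emph{different} chart, which does not fit into a chain $A\to B\to C$ with $B=\mathcal{O}_{Y',g(\bar x)}$.

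\textbf{What is missing is the actual mechanism of Kato's proof.} The base for the fibrewise criterion is $\ul{Y_\theta}=\ul{Y}\times_{\ul{\mathsf{A}_P}}\ul{\mathsf{A}_Q}$ itself, reached through the action of the diagonalizable group $\mathsf{A}^{*}_{Q/P}=\Spec(\mathbb{Z}[\cok(\theta^{\gp})])$. The paper imports from Kato/Ogus two facts that need no finiteness hypothesis:
\begin{itemize}
\item the map $m_\theta\colon\ul{X}\times\mathsf{A}^{*}_{Q/P}\to\ul{Y_\theta}$, $(x',t)\mapsto b_\theta(x')\cdot t$, is flat;
\item the restriction $\ul{X^{+}}\to\ul{Y_\theta^{+}}$ to the closed stratum cut out by $\mathbb{Z}[Q^{+}]$ is flat.
\end{itemize}
The paper then argues as follows.
\begin{itemize}
\item The action map $m_Y\colon\ul{Y_\theta}\times\mathsf{A}^{*}_{Q/P}\to\ul{Y_\theta}$ is flat.
\item Since $Y_\theta^{+}$ is stable under the action, the fibres of $m_\theta$ over points of $Y_\theta^{+}$ are base changes of $\ul{X^{+}}\times\mathsf{A}^{*}_{Q/P}\to\ul{Y_\theta^{+}}\times\mathsf{A}^{*}_{Q/P}$, hence flat over the corresponding fibres of $m_Y$.
\item After making the chart local at $x$, so that $\mathbb{Z}[Q^{+}]$ maps into the maximal ideal and $x\in\ul{X^{+}}$, the Noetherian critère applied to $\ul{X}\times\mathsf{A}^{*}_{Q/P}\to\ul{Y_\theta}\times\mathsf{A}^{*}_{Q/P}\to\ul{Y_\theta}$ gives flatness of $\ul{b_\theta}\times\id$ at all points over $x$.
\item Faithful flatness of $\mathsf{A}^{*}_{Q/P}$ over $\Spec(\mathbb{Z})$ descends this to flatness of $\ul{b_\theta}$ at $x$.
\end{itemize}
Your ``closed fibre over a log point'' step is a placeholder for exactly this content. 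Set up over $\mathcal{O}_{Y,\bar y}$, it is not the fibre the argument needs.
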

\begin{proof}
    The proof can be done by exactly following the proof of \cite[Chapter \Rnum{4}, Theorem 4.1.7]{MR3838359}.
    Take a fine chart $(a,\theta,b)$ which is neat at $x$.
    Let $b_{\theta}\colon X\to Y_{\theta}=Y\times_{\mathsf{A}_{P}}\mathsf{A}_{Q}$ the morphism induced by $f$ and $b$.
    Write $\mathsf{A}_{Q/P}^{*}$ for the group scheme $\Spec(\mathbb{Z}[\cok(\theta^{\gp})])$ with the trivial log structure.
    Consider the morphism $m_{Q/P}\colon \mathsf{A}_{Q}\times \mathsf{A}_{Q/P}^{*}\to \mathsf{A}_{Q}$ induced by the morphism $Q\to Q\oplus \cok(\theta^{\gp})$ sending $x$ to $(x,\bar{x})$.
    Then we have the diagram
    \begin{equation*}
        \begin{tikzpicture}[auto]
            \node (x) at (0,2) {$\ul{X}\times \mathsf{A}_{Q/P}^{*}$};
            \node (y) at (4,2) {$\ul{Y_{\theta}}\times \mathsf{A}_{Q/P}^{*}$};
            \node (s) at (2,0) {$\ul{Y_{\theta}},$};
            \draw[->] (x) to node {$\scriptstyle \ul{b_{\theta}}\times \id$} (y);
            \draw[->] (x) to node[swap] {$\scriptstyle \ul{m_{\theta}}$} (s);
            \draw[->] (y) to node {$\scriptstyle \ul{m_{Y}}$} (s);
        \end{tikzpicture}
    \end{equation*}
    where $m_{Y}\colon Y_{\theta}\times \mathsf{A}_{Q/P}^{*}\to Y_{\theta}$ is the morphism induced by $m_{Q/P}$ and $m_{\theta}\colon X\times \mathsf{A}_{Q/P}^{*}\to Y_{\theta}$ is the morphism induced by the action of $\mathsf{A}_{Q/P}^{*}$ and the morphism $X\to Y_{\theta}$.
    By the proof of \cite[Chapter \Rnum{4}, Theorem 4.1.7]{MR3838359}, the following hold:
    \begin{itemize}
        \item The morphism $\ul{m_{\theta}}$ is flat.
        \item The morphism $\ul{b_{\theta}^{+}}\colon \ul{X^{+}}=\ul{b_{\theta}}^{-1}(\ul{Y_{\theta}^{+}})\to \ul{Y_{\theta}^{+}}$ is flat, where $Y_{\theta}^{+}$ is the pullback of $Y_{\theta}\to \mathsf{A}_{Q}$ along the strict closed immersion $\mathsf{A}_{(Q,Q^{+})}=\Spec(\mathbb{Z}[Q]/\mathbb{Z}[Q^{+}])\hookrightarrow \mathsf{A}_{Q}$ induced by $Q\to \mathbb{Z}[Q]\to \mathbb{Z}[Q]/\mathbb{Z}[Q^{+}]$.
            Here, $\mathbb{Z}[Q^{+}]$ is the ideal generated by the set $Q^{+}$ of non-invertible elements of $Q$.
    \end{itemize}
    Note that we may assume that $b_{\bar{x}}\colon Q\to \mathcal{M}_{X,\bar{x}}$ induces $b_{\bar{x}}^{-1}(\mathcal{M}_{X,\bar{x}}^{\times})\cong \mathcal{M}_{X,\bar{x}}^{\times}$ by \cite[Chapter \Rnum{2}, Theorem 2.4.4]{MR3838359}.
    Hence the morphism $\beta\colon \mathbb{Z}[Q]\to \mathcal{O}_{X,\bar{x}}$ induced by $Q\to \mathcal{M}_{X,\bar{x}}$ and $\mathcal{M}_{X,\bar{x}}\to \mathcal{O}_{X,\bar{x}}$ satisfies $\mathbb{Z}[Q^{+}]\subset \beta^{-1}(\mathfrak{m}_{\bar{x}})$, which implies that $x$ belongs to $\ul{X^{+}}$.
    Therefore, due to \cite[Théorème (11.3.10)]{MR217086} (known as ``critère de platitude par fibres''), the morphism $\ul{b_{\theta}}\times \id$ is flat at every point $x'\in \ul{X}\times \mathsf{A}_{Q/P}^{*}$ lying over $x$.
    Since $\mathsf{A}_{Q/P}^{*}$ is faithfully flat over $\Spec(\mathbb{Z})$, we obtain the conclusion.
\end{proof}

\subsubsection{Log regularity and charts}

\begin{prop}[{\cite[Theorem 5.2.8]{MR4211089}}]\label{prop:logreg_chart}
    Let $f\colon X\to Y$ be a morphism of locally Noetherian fine log schemes.
    \begin{itemize}
        \item[\textup{(1)}] $f$ is log regular at $x\in X$ if and only if, for any fppf local fine chart $a\colon Y\to \mathsf{A}_{P}$, there exists an étale local fine chart
            \begin{equation*}
                (a\colon Y\to \mathsf{A}_{P},\theta\colon P\to Q,b\colon X\to \mathsf{A}_{Q}),
            \end{equation*}
            for $f$ satisfying
            \begin{itemize}
                \item[\textup{(\rnum{1})}] $\theta\colon P\to Q$ is injective,
                \item[\textup{(\rnum{2})}] The order of the torsion subgroup of $\cok(\theta^{\gp})$ is invertible in $k(\bar{x})$,
                \item[\textup{(\rnum{3})}] The underlying morphism $\ul{X}\to \ul{Y}\times_{\ul{\mathsf{A}_{P}}}\ul{\mathsf{A}_{Q}}$ is regular at $x$.
            \end{itemize}
        \item[\textup{(2)}] If $f$ is log regular at $x$, then, for any fppf local fine chart
            \begin{equation*}
                (a\colon Y\to \mathsf{A}_{P},\theta\colon P\to Q,b\colon X\to \mathsf{A}_{Q})
            \end{equation*}
            which is neat at $x$, the underlying morphism $\ul{X}\to \ul{Y}\times_{\ul{\mathsf{A}_{P}}}\ul{\mathsf{A}_{Q}}$ is regular at $x$.
    \end{itemize}
\end{prop}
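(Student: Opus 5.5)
The plan is to reduce everything to the underlying morphism $\ul{b_{\theta}}\colon \ul{X}\to \ul{Y_{\theta}}$, where $Y_{\theta}=Y\times_{\mathsf{A}_{P}}\mathsf{A}_{Q}$. The tools are Theorem \ref{thm:logP} (log regular $\Leftrightarrow$ weakly log regular) and Olsson's local description of $\bLog_{Y}$ via charts. Fix a fine chart $(a,\theta,b)$ with $\theta$ injective, and let $G=\mathsf{A}_{Q/P}^{*}$ act on $Y_{\theta}$ through $m_{Q/P}$; this action is trivial on $Y$ and $\mathsf{A}_{P}$. Olsson's construction gives a morphism
\begin{equation*}
    \pi_{\theta}\colon [\ul{Y_{\theta}}/G]\to \bLog_{Y}.
\end{equation*}
I will use that $\pi_{\theta}$ is representable and étale when the order of the torsion of $\cok(\theta^{\gp})$ is invertible. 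I expect this to follow from Olsson's analysis of $\bLog_{\mathsf{A}_{P}}$. The morphism $w\bLog_{f}$ factors as $\ul{X}\to [\ul{Y_{\theta}}/G]\to \bLog_{Y}$. Under the torsion hypothesis, weak log regularity at $x$ is therefore equivalent to regularity at $x$ of $\ul{X}\to [\ul{Y_{\theta}}/G]$. Pulling back along the smooth atlas $\ul{Y_{\theta}}\to [\ul{Y_{\theta}}/G]$, this is in turn equivalent to regularity of $\ul{m_{\theta}}\colon \ul{X}\times G\to \ul{Y_{\theta}}$ at the points over $x$.

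Next I compare $\ul{m_{\theta}}$ with $\ul{b_{\theta}}$. We have $\ul{m_{\theta}}=\ul{m_{Y}}\circ(\ul{b_{\theta}}\times\id)$, as in the diagram in the proof of Proposition \ref{prop:flat_neat_noeth}. Moreover $\ul{m_{Y}}$ becomes the second projection after the automorphism $(y,g)\mapsto(yg,g)$ of $\ul{Y_{\theta}}\times G$. Since $G$ is smooth when the torsion is invertible, $\ul{m_{Y}}$ is smooth.

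For the ``if'' direction of (1), assume $\ul{b_{\theta}}$ is regular at $x$. Then $\ul{b_{\theta}}\times\id$ is regular, since regularity is stable under base change by smooth morphisms. Composing with the smooth $\ul{m_{Y}}$ shows that $\ul{m_{\theta}}$ is regular over $x$, so $f$ is log regular at $x$.

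For (2), assume $f$ is log regular at $x$ and take a neat chart. Log regular implies log flat, so Proposition \ref{prop:flat_neat_noeth} gives flatness of $\ul{b_{\theta}}$ at $x$. It remains to show that the fibre of $\ul{b_{\theta}}$ through $x$ is geometrically regular at $x$. Neatness means $\cok(\theta^{\gp})\cong\mathcal{M}_{X/Y,\bar{x}}^{\gp}$. I would use this to show that the fibre of $\ul{m_{\theta}}$ through a point $(x,1)$ is, locally, the product of the fibre of $\ul{b_{\theta}}$ with a smooth scheme. The idea is that the $G$-orbit directions are ``used up'' exactly by $\mathcal{M}_{X/Y}$, as in the critère de platitude par fibres argument of Proposition \ref{prop:flat_neat_noeth}, now restricted to the locus $\ul{X^{+}}$. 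Geometric regularity of the fibre of $\ul{m_{\theta}}$ then descends to the fibre of $\ul{b_{\theta}}$, since a factor of a regular product with a smooth scheme is regular.

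For the ``only if'' direction of (1), given $a$, I would produce a neat chart $(a,\theta,b)$ étale locally using Proposition \ref{prop:exist_neat}(2), and then apply (2). A neat chart automatically has $\theta^{\gp}$ injective. If necessary I would replace $Q$ by the preimage in $Q^{\gp}$ of the localization at the face over $\bar{x}$, which makes $\theta$ injective without changing the neatness conditions. The main obstacle is the torsion condition (ii): I must show that log regularity at $x$ forces the torsion of $\mathcal{M}_{X/Y,\bar{x}}^{\gp}$ to have order invertible in $k(\bar{x})$. I would first try to argue directly on the stack level. If $p$-torsion is present, the stabilizer of $\bLog_{Y}$ at the image point contains a non-reduced group such as $\mu_{p}$, and regularity of $\ul{X}\to\bLog_{Y}$ should then contradict the reducedness of the fibre. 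If this fails, I would fall back on Molcho--Temkin's fppf-local argument, replacing $\mathcal{M}_{Y}$ by a root extension so that the non-invertible torsion is absorbed into $P$.
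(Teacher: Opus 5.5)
The paper does not prove this proposition; it quotes it from Molcho--Temkin (Theorem 5.2.8), so your argument has to stand on its own. Your ``if'' half of (1) is fine, granting Olsson's étaleness of $\pi_{\theta}$, with (ii) making $G$ smooth. Part (2) and the ``only if'' half of (1), however, are not proved.

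The decisive step of (2) is your claim that the fibre of $\ul{m_{\theta}}$ through $(x,1)$ is the fibre of $\ul{b_{\theta}}$ times a smooth scheme. You support it only with a heuristic (orbit directions ``used up'', $\ul{X^{+}}$, critère de platitude), not an argument.

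The argument is also circular as organised. You allowed yourself ``weakly log regular at $x$ $\Leftrightarrow$ $\ul{m_{\theta}}$ regular over $x$'' only under the torsion hypothesis. In (2) the chart is merely neat, so the torsion of $\cok(\theta^{\gp})\cong\mathcal{M}_{X/Y,\bar{x}}^{\gp}$ is exactly the unknown you postpone to the end. Likewise, Proposition \ref{prop:exist_neat}(2), which you invoke to get the étale neat chart for ``only if'', needs that same torsion condition. The torsion claim itself is left as two speculative strategies. So (ii), the existence of the chart, and (2) all remain open.

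One computation closes both holes.

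\begin{itemize}
\item Olsson's étaleness of $\pi_{\theta}$ needs only $\theta^{\gp}$ injective; the torsion hypothesis is used only to make $G$ smooth. Flatness with geometrically regular fibres is fpqc local on the target, so you may test regularity of $\ul{X}\to[\ul{Y_{\theta}}/G]$ on the fppf atlas $\ul{Y_{\theta}}$. Hence $\ul{m_{\theta}}$ is regular at $(x,1)$ with no torsion assumption.
\item Let $F\subset Q$ be the face of elements that become units at $\bar{x}$. Then $y'=b_{\theta}(x)$ lies over the locally closed torus orbit $\Spec(\mathbb{Z}[F^{\gp}])\subset\mathsf{A}_{Q}$, on which $G$ acts through $F^{\gp}\to\cok(\theta^{\gp})$.
\item This map is zero: its composite with $\cok(\theta^{\gp})\to\mathcal{M}_{X/Y,\bar{x}}^{\gp}$ kills $F$, and that second map is injective by neatness.
\item So $y'$ is scheme-theoretically $G$-fixed, and $\ul{m_{\theta}}^{-1}(y')\cong\ul{b_{\theta}}^{-1}(y')\times_{k(y')}G_{k(y')}$ exactly.
\item Geometric regularity of this product at $(x,1)$ descends along the flat projection to $\ul{b_{\theta}}^{-1}(y')$. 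Together with Proposition \ref{prop:flat_neat_noeth}, this gives (2).
\item The same regularity forces $G_{k(y')}=\Spec(k(y')[\cok(\theta^{\gp})])$ to be reduced. That is, the torsion of $\mathcal{M}_{X/Y,\bar{x}}^{\gp}$ has order invertible in $k(\bar{x})$, which is (ii).
\end{itemize}

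For this torsion step you must work with a neat chart that a priori exists only fppf locally, obtained by extracting roots of the units representing the $\ext^{1}$ obstruction. Once (ii) is known, Proposition \ref{prop:exist_neat}(2) supplies the étale neat chart for ``only if''.

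A minor point: $\theta^{\gp}$ injective already forces $\theta$ injective because $P$ is integral, so no modification of $Q$ is needed.
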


Now, we can compare the relative log regularity with the absolute log regularity in the sense of Kato (\cite{MR1296725}).
Let $X$ be a locally Noetherian fine saturated log scheme.
Recall that $X$ is \textit{log regular} at $x\in X$ if the following hold:
\begin{itemize}
    \item[(1)] $\mathcal{O}_{X,\bar{x}}/I_{\bar{x}}$ is regular, where $I_{\bar{x}}$ denotes the ideal of $\mathcal{O}_{X,\bar{x}}$ generated by the images of non-invertible elements of $\mathcal{M}_{X,\bar{x}}$.
    \item[(2)] The equality $\dim(\mathcal{O}_{X,\bar{x}})=\dim(\mathcal{O}_{X,\bar{x}}/I_{\bar{x}})+\rank(\overline{\mathcal{M}}_{X,\bar{x}}^{\gp})$.
\end{itemize}
We say that $X$ is \textit{log regular} if it is log regular at each point.
\begin{prop}[{cf.\ \cite[Section 5.3]{MR4211089}}]
    Let $f\colon X\to Y$ be a morphism of locally Noetherian fine saturated log schemes.
    \begin{itemize}
        \item[\textup{(1)}] Assume that $f$ is log regular at $x$. Then $X$ is log regular at $x$ if and only if $Y$ is log regular at $y=f(x)$.
        \item[\textup{(2)}] Assume that $Y$ is the spectrum of a perfect field $k$ with the trivial log structure. Then $f$ is log regular at $x$ if and only if $X$ is log regular at $x$.
    \end{itemize}
\end{prop}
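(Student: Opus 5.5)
The plan is to compute both sides of Kato's definition directly in terms of a neat chart for $f$. We reduce to the chart $Y_{\theta}=Y\times_{\mathsf{A}_{P}}\mathsf{A}_{Q}$, where log regularity is controlled by monoid algebras. Then we transport regularity and dimension along the regular morphism $\ul{X}\to\ul{Y_{\theta}}$.

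For (1), assume $f$ is log regular at $x$. By Proposition \ref{prop:logreg_chart} (1), étale locally there is a fine chart $(a,\theta,b)$ with $\theta$ injective, the torsion of $\cok(\theta^{\gp})$ invertible in $k(\bar{x})$, and $\ul{X}\to\ul{Y_{\theta}}$ regular at $x$. Since the torsion is invertible, Proposition \ref{prop:exist_neat} lets me replace it by a chart that is neat at $x$, choosing $a$ neat at $y$ as well. Proposition \ref{prop:logreg_chart} (2) then keeps $\ul{X}\to\ul{Y_{\theta}}$ regular at $x$. Because $X$ and $Y$ are saturated, we may take $P,Q$ fs, sharp, with $P\cong\overline{\mathcal{M}}_{Y,\bar{y}}$ and $Q\cong\overline{\mathcal{M}}_{X,\bar{x}}$. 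Write $z$ for the image of $x$ in $Y_{\theta}$. Neatness gives $\rank\overline{\mathcal{M}}_{X,\bar{x}}^{\gp}=\rank\overline{\mathcal{M}}_{Y,\bar{y}}^{\gp}+\rank\cok(\theta^{\gp})$. Strictness of $X\to Y_{\theta}$ gives $I_{\bar{x}}=I_{\bar{z}}\mathcal{O}_{X,\bar{x}}$. Hence $\mathcal{O}_{X,\bar{x}}/I_{\bar{x}}$ is flat over $\mathcal{O}_{Y_{\theta},\bar{z}}/I_{\bar{z}}$ with the same regular closed fibre as $\mathcal{O}_{X,\bar{x}}$ over $\mathcal{O}_{Y_{\theta},\bar{z}}$.

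Flatness with regular fibres now gives three things. First, $\mathcal{O}_{X,\bar{x}}/I_{\bar{x}}$ is regular if and only if $\mathcal{O}_{Y_{\theta},\bar{z}}/I_{\bar{z}}$ is regular. Second, $\dim\mathcal{O}_{X,\bar{x}}=\dim\mathcal{O}_{Y_{\theta},\bar{z}}+\dim(\text{fibre})$. Third, the same additivity holds modulo $I$. Thus $X$ is log regular at $x$ if and only if $Y_{\theta}$ is log regular at $z$, i.e.\ log regularity descends and ascends along strict regular morphisms.

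It remains to compare $Y_{\theta}$ and $Y$. Modulo the ideal generated by $Q^{+}$, the ring $\mathcal{O}_{Y_{\theta}}/I$ is locally $\mathcal{O}_{Y}/I_{Y}\otimes_{\mathbb{Z}}\mathbb{Z}[\cok(\theta^{\gp})]$ near $z$. Here we use $\theta^{-1}(Q^{+})=P^{+}$, which neatness gives since $P$ and $Q$ are sharp. This is the base change of the group scheme $\mathsf{A}_{Q/P}^{*}$, which is smooth because the torsion order is invertible. So regularity of $\mathcal{O}/I$ transfers both ways, and the dimension modulo $I$ increases by $\rank\cok(\theta^{\gp})$. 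The full ring $\mathcal{O}_{Y_{\theta},\bar{z}}$ is flat over $\mathcal{O}_{Y,\bar{y}}$ because an injective integral morphism of fs monoids gives a flat $\mathbb{Z}[P]\to\mathbb{Z}[Q]$. Its closed fibre, computed via $\mathbb{Z}[Q]\otimes_{\mathbb{Z}[P]}\mathbb{Z}[P]/P^{+}$, has dimension $\rank Q^{\gp}-\rank P^{\gp}$. Hence $\dim\mathcal{O}_{Y_{\theta},\bar{z}}=\dim\mathcal{O}_{Y,\bar{y}}+\rank\cok(\theta^{\gp})$. Combining the dimension and rank equalities shows that Kato's equality holds at $z$ exactly when it holds at $y$. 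The main obstacle is this step. Integrality of $\theta$ is not automatic, so I would first try to localize $Q$ and $P$ at the faces cut out by $x$ and $y$ to get the needed flatness and fibre-dimension count, and otherwise bound the fibre dimension by Krull's inequality and use going-down.

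For (2), with $Y=\Spec k$ and $P=\{1\}$, the direction ``$f$ log regular $\Rightarrow$ $X$ log regular'' follows from (1), since $Y$ is trivially log regular. For the converse, I would take a chart $X\to\mathsf{A}_{Q}$ over $k$ that is neat at $x$; such a chart exists because $Q=\overline{\mathcal{M}}_{X,\bar{x}}$ is saturated and hence torsion free. Kato's structure theorem for log regular local rings gives $\widehat{\mathcal{O}}_{X,\bar{x}}\cong k'[[Q]][[t_{1},\dots,t_{r}]]$ in equal characteristic. Since $k$ is perfect, the residue field $k'$ is separable over $k$, so the fibres of $\ul{X}\to\ul{\mathsf{A}_{Q}}$ at $x$ are geometrically regular. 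Thus this morphism is regular at $x$, and Proposition \ref{prop:logreg_chart} (1) shows that $f$ is log regular at $x$.
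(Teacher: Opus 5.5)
Your part (1) has a genuine gap at its first step. The torsion condition in Proposition~\ref{prop:logreg_chart}~(1) is about $\cok(\theta^{\gp})$ for that particular chart. Proposition~\ref{prop:exist_neat}~(2) instead needs the torsion of $\mathcal{M}_{X/Y,\bar{x}}^{\gp}$, which is the quotient of $\cok(\theta^{\gp})$ by the image of $F^{\gp}$, with $F=b_{\bar{x}}^{-1}(\mathcal{M}_{X,\bar{x}}^{\times})$, and such a quotient can acquire non-invertible torsion. For example, in characteristic $p$ take $f\colon X=\mathbb{A}^{2}\to Y=\mathbb{A}^{1}$, $t\mapsto x_{1}x_{2}^{p}$, with the toric log structures. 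The chart $\mathbb{N}\to\mathbb{N}^{2}$, $1\mapsto(1,p)$, is injective with torsion-free cokernel and $X=Y_{\theta}$. Hence $f$ is log smooth, so log regular, and $X$, $Y$ are fs. Yet at a point with $x_{1}\neq 0=x_{2}$ the map $\overline{\mathcal{M}}_{Y,\bar{y}}\to\overline{\mathcal{M}}_{X,\bar{x}}$ is $\mathbb{N}\to\mathbb{N}$, $1\mapsto p$, so $\mathcal{M}_{X/Y,\bar{x}}^{\gp}\cong\mathbb{Z}/p$. No neat chart exists there: it would make $x_{1}^{m}$ with $p\nmid m$ a $p$-th power up to a unit pulled back from $Y$, which is impossible (restrict to $x_{2}=0$ and apply $d$). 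So at such points you have neither $Q\cong\overline{\mathcal{M}}_{X,\bar{x}}$, nor your rank identity, nor Proposition~\ref{prop:logreg_chart}~(2).

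Even where a neat chart exists, your comparison of $Y_{\theta}$ with $Y$ is not right as written. With $Q$ sharp and $\theta$ local, one has locally $\mathcal{O}_{Y_{\theta}}/I\cong\mathcal{O}_{Y}\otimes_{\mathbb{Z}[P]}\mathbb{Z}[Q]/(Q^{+})\cong\mathcal{O}_{Y}/I_{Y}$, with no factor $\mathbb{Z}[\cok(\theta^{\gp})]$. In your count, both $\dim\mathcal{O}/I$ and the rank jump by $\rank\cok(\theta^{\gp})$ while $\dim\mathcal{O}$ jumps only once, so the two Kato equalities are not equivalent; test $Y=\Spec k$, $Y_{\theta}=\mathsf{A}_{\mathbb{N}}$ at the origin.

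More seriously, $\mathbb{Z}[P]\to\mathbb{Z}[Q]$ need not be flat even for a neat chart; the blow-up chart $\mathbb{N}^{2}\to\mathbb{N}^{2}$, $(a,b)\mapsto(a+b,b)$, is a counterexample. So $\dim\mathcal{O}_{Y_{\theta},\bar{z}}=\dim\mathcal{O}_{Y,\bar{y}}+\rank\cok(\theta^{\gp})$ stays unproved, and Krull's inequality plus going-down do not give it without flatness.

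The idea you are missing is the one the paper uses. Keep the possibly non-neat chart of Proposition~\ref{prop:logreg_chart}~(1). Observe that $Y_{\theta}\to Y$ is log smooth by Kato's chart criterion \cite[Chapter IV, Theorem 3.1.8]{MR3838359}. Then pass between $Y_{\theta}$ and $Y$ using the behaviour of log regularity under log smooth morphisms \cite[Chapter IV, Proposition 3.5.3]{MR3838359}. After that, only the strict regular morphism $X\to Y_{\theta}$ has to be treated by hand. Your argument for that step (flatness modulo $I$, the same regular closed fibre, additivity of dimension) is exactly the paper's.

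Your part (2) also matches the paper. There $\mathcal{M}_{X/Y}=\overline{\mathcal{M}}_{X}$ is torsion free, so a neat chart with $Q\cong\overline{\mathcal{M}}_{X,\bar{x}}$ does exist. Kato's structure (or flatness) theorem plus a regular closed fibre over a perfect field then give regularity of $\ul{X}\to\ul{\mathsf{A}_{Q}}$ at $x$.
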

\begin{proof}
    (1) Since $f$ is log regular at $x$, we have an étale local fine chart
    \begin{equation*}
        (a\colon Y\to \mathsf{A}_{P},\theta\colon P\to Q,b\colon X\to \mathsf{A}_{Q})
    \end{equation*}
    as in Proposition \ref{prop:logreg_chart} (1).
    Then $f\colon X\to Y$ factors into $X\to Y'\to Y$, where $Y'=Y\times_{\mathsf{A}_{P}}\mathsf{A}_{Q}$.
    We see that the morphism $Y'\to Y$ is log smooth by \cite[Chapter \Rnum{4}, Theorem 3.1.8]{MR3838359}.
    Let $y'$ be the image of $x$ in $Y'$.
    Then $Y'$ is log regular at $y'$ if and only if $Y$ is log regular at $y$ by \cite[Chapter \Rnum{4}, Proposition 3.5.3]{MR3838359}.
    Therefore we may assume that $f$ is strict.
    Since $f$ is strict, we have $\overline{\mathcal{M}}_{Y,\bar{y}}=\overline{\mathcal{M}}_{X,\bar{x}}$ and $I_{\bar{x}}=I_{\bar{y}}\mathcal{O}_{X,\bar{x}}$.
    As $\mathcal{O}_{Y,\bar{y}}\to \mathcal{O}_{X,\bar{x}}$ is regular (and hence flat), we have
    \begin{itemize}
        \item $\mathcal{O}_{Y,\bar{y}}/I_{\bar{y}}$ is regular if and only if $\mathcal{O}_{X,\bar{x}}/I_{\bar{x}}$ is regular,
        \item $\dim(\mathcal{O}_{X,\bar{x}})-\dim(\mathcal{O}_{Y,\bar{y}})=\dim(\mathcal{O}_{X,\bar{x}}/I_{\bar{x}})-\dim(\mathcal{O}_{Y,\bar{y}}/I_{\bar{y}})$,
    \end{itemize}
    which completes the proof.

    (2) The only if part follows from (1).
    Assume that $X$ is log regular at $x$.
    Since $X$ is saturated, by Proposition \ref{prop:exist_neat} (1), we can take an étale local fine chart $P$ around $x$ such that $P\cong \overline{\mathcal{M}}_{X,\bar{x}}$.
    Then $\mathcal{O}_{X,\bar{x}}/I_{\bar{x}}$ is regular and $k[P]\to \mathcal{O}_{X,\bar{x}}$ is flat by \cite[(6.2) Theorem]{MR1296725}.
    Hence $\ul{X}\to \ul{Y}\times \ul{\mathsf{A}_{P}}$ is regular at $x$.
\end{proof}

\section{Main results}

In this section, we prove Theorem \ref{thm:logRA} and Corollary \ref{cor:logKunz}.
Before that, we recall absolute and relative Frobenius morphisms of log schemes.

For a log scheme $X$ over $\mathbb{F}_{p}$, the absolute Frobenius morphism $F_{X}$ of $X$ is given by the absolute Frobenius morphism $F_{\ul{X}}$ of the underlying scheme $\ul{X}$ and the morphism
\begin{equation*}
    \mathcal{M}_{X}\to \mathcal{M}_{X}
\end{equation*}
sending a local section $a$ to $a^{p}$.

For an integral morphism $f\colon X\to Y$ of integral log schemes, the relative Frobenius morphism $F_{X/Y}\colon X\to X^{(p)}$ fitting into the following commutative diagram
\begin{equation*}
    \begin{tikzpicture}[auto]
        \node (a) at (-2,2) {$X$};
        \node (b) at (0,0) {$X$};
        \node (c) at (2,2) {$Y$};
        \node (d) at (2,0) {$Y,$};
        \node (F) at (0,2) {$X^{(p)}$};
        \draw[->] (a) to[bend right=30] node[swap] {$\scriptstyle F_{X}$} (b);
        \draw[->] (a) to[bend left=60] node {$\scriptstyle f$} (c);
        \draw[->] (b) to node[swap] {$\scriptstyle f$} (d);
        \draw[->] (c) to node {$\scriptstyle F_{Y}$} (d);
        \draw[->] (F) to (b);
        \draw[->] (F) to (c);
        \draw[->, dashed] (a) to node {$\scriptstyle F_{X/Y}$} (F);
    \end{tikzpicture}
\end{equation*}
where the square is cartesian in the category of log schemes.
Note that $X^{(p)}$ is also integral by integrality of $f\colon X\to Y$.

\begin{thm}\label{thm:logRA}
    Let $f\colon X\to Y$ be an integral morphism of locally Noetherian fine log schemes over $\mathbb{F}_{p}$ such that $\mathcal{M}_{X/Y}^{\gp}$ is $p$-torsion free.
    Then $f$ is log regular if and only if $F_{X/Y}\colon X\to X^{(p)}$ is log flat.
\end{thm}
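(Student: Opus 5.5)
The plan is to work étale locally around a point $x\in X$ with $y=f(x)$, and to reduce both sides of the equivalence to a statement about one morphism of underlying schemes. To that morphism I then apply the classical Radu--André theorem (Theorem \ref{thm:RA_classical}).

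\textbf{Choosing a neat chart.} Since $\mathcal{M}_{X/Y,\bar{x}}^{\gp}$ is finitely generated and $p$-torsion free, the order of its torsion subgroup is prime to $p$, hence invertible in $k(\bar{x})$. By Proposition \ref{prop:exist_neat}, étale locally I choose a chart $a\colon Y\to\mathsf{A}_P$ neat at $y$, and then a fine chart $(a,\theta,b)$ for $f$ neat at $x$. Set $Y_\theta=Y\times_{\mathsf{A}_P}\mathsf{A}_Q$ and let $h\colon \ul{X}\to\ul{Y_\theta}$ be the induced morphism of underlying schemes.

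\textbf{Log regularity in terms of $h$.} By Proposition \ref{prop:logreg_chart} (2), if $f$ is log regular at $x$ then $h$ is regular at $x$. Conversely, suppose $h$ is regular at $x$. I would like to replace $P$ by its image in $Q$ so that $\theta$ becomes injective; this is possible after passing to the integral image, and $\theta^{\gp}$ is already injective. Then conditions (i)--(iii) of Proposition \ref{prop:logreg_chart} (1) hold, so $f$ is log regular at $x$. Hence $f$ is log regular at $x$ if and only if $h$ is regular at $x$.

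\textbf{Charts for the Frobenius.} Since $f$ is integral, $X^{(p)}$ is computed as the fibered product of log schemes, and it is integral. It has the chart $Q'=Q\oplus_{P,\,\cdot^p}P$, obtained by pushing $\theta$ out along the $p$-th power map of $P$. The relative Frobenius $F_{X/Y}$ has the chart $\psi\colon Q'\to Q$, $(q,a)\mapsto q^p\theta(a)$.

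I claim this chart is neat at $x$.
\begin{itemize}
\item[(i)] For injectivity of $\psi^{\gp}$: if $q^p\theta(a)=1$, then the class of $q$ is $p$-torsion in $\cok(\theta^{\gp})$, hence trivial. So $q=\theta(c)$, and injectivity of $\theta^{\gp}$ gives $a=c^{-p}$. The element $(\theta(c),c^{-p})$ is trivial in $Q'^{\gp}$.
\item[(ii)] For the cokernel: $\cok(\psi^{\gp})=Q^{\gp}/(Q^{\gp})^{p}\theta(P^{\gp})\cong\cok(\theta^{\gp})/p$. The same computation on stalks, using $\overline{\mathcal{M}}_{X^{(p)}}$, identifies this group with $\mathcal{M}_{X/X^{(p)},\bar{x}}^{\gp}$.
\end{itemize}
This torsion is not invertible in $k(\bar{x})$. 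That is why I use Proposition \ref{prop:flat_neat_noeth}, which needs only neatness and local Noetherianity, rather than Proposition \ref{prop:logflat}.

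\textbf{Identifying the underlying morphism.} Next I identify the morphism $\ul{X}\to\ul{X^{(p)}}\times_{\ul{\mathsf{A}_{Q'}}}\ul{\mathsf{A}_Q}$. Over $\mathbb{F}_p$, the absolute Frobenius of $\mathsf{A}_Q$ is induced by $q\mapsto q^p$. Also $\ul{\mathsf{A}_{Q'}}=\ul{\mathsf{A}_Q}\times_{\ul{\mathsf{A}_P},F}\ul{\mathsf{A}_P}$. Rearranging fibered products should give
\begin{equation*}
\ul{X^{(p)}}\times_{\ul{\mathsf{A}_{Q'}}}\ul{\mathsf{A}_Q}\cong \ul{X}\times_{\ul{Y_\theta},F_{\ul{Y_\theta}}}\ul{Y_\theta},
\end{equation*}
and under this isomorphism the morphism becomes the scheme-theoretic relative Frobenius $F_{\ul{X}/\ul{Y_\theta}}$ of $h$. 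I expect this bookkeeping, together with the neatness check above, to be the main obstacle. I would verify it directly on rings: the left side is $\mathcal{O}_X\otimes_{\mathcal{O}_Y,F}\mathcal{O}_Y\otimes_{\mathbb{F}_p[Q']}\mathbb{F}_p[Q]$, to be compared with $\mathcal{O}_X\otimes_{\mathcal{O}_{Y_\theta},F}\mathcal{O}_{Y_\theta}$.

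\textbf{Conclusion.} Suppose $F_{X/Y}$ is log flat. Proposition \ref{prop:flat_neat_noeth}, applied with the neat chart $\psi$, shows that $F_{\ul{X}/\ul{Y_\theta}}$ is flat at $x$. By the pointwise form of Theorem \ref{thm:RA_classical}, using that the flat locus is open, $h$ is regular at $x$, and so $f$ is log regular at $x$.

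Conversely, suppose $f$ is log regular. Then $h$ is regular, so $F_{\ul{X}/\ul{Y_\theta}}$ is flat by Theorem \ref{thm:RA_classical}. The chart $\psi$ is injective after passing to its image, so by the chart criterion of Proposition \ref{prop:logflat}, $F_{X/Y}$ is log flat near $x$.

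Local Noetherianity of $Y_\theta$ and of $X^{(p)}$ is needed throughout. It holds because both are of finite type over locally Noetherian schemes, using that $F_Y$ is finite on the relevant charts, or working fppf locally if necessary.
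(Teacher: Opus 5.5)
Your proposal follows the paper's proof step by step. You take a chart $(a,\theta,b)$ neat at $x$ from Proposition \ref{prop:exist_neat}~(2), and show the pushout chart $Q^{(p)}=Q\oplus_{P,F}P\to Q$ for $F_{X/Y}$ is neat by the same $p$-torsion argument. You then identify the underlying morphism with the relative Frobenius of $\ul{X}\to\ul{Y_\theta}$, and conclude with classical Radu--Andr\'e plus Propositions \ref{prop:logflat}, \ref{prop:flat_neat_noeth} and \ref{prop:logreg_chart}. The step you call the main obstacle is settled in the paper by the formal Lemma \ref{lem:meta_fib}, checked on sets via Yoneda. It is applied to $\ul{Y}\to\ul{\mathsf{A}_P}\leftarrow\ul{\mathsf{A}_Q}$ with absolute Frobenii as vertical maps, so no ring computation is needed.

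Some auxiliary claims need correcting.

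\emph{Neatness at $y$.} You cannot in general choose $a$ neat at $y$, since nothing is assumed about $p$-torsion in $\overline{\mathcal{M}}^{\gp}_{Y}$. You never use this, though: Proposition \ref{prop:exist_neat}~(2) only needs some fine chart for $Y$.

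\emph{Injectivity of $\theta$.} No replacement of $P$ is needed. $\theta$ is already injective, because $\theta^{\gp}$ is injective and $P$ is integral.

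\emph{Openness of the flat locus.} You do not need it. Apply Radu--Andr\'e directly to the local map $\mathcal{O}_{Y_\theta,h(x)}\to\mathcal{O}_{X,x}$.

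\emph{Local Noetherianity of $\ul{X^{(p)}}$.} This is the substantive error: your final justification is false, because $F_Y$ need not be finite and $f$ need not be of finite type. For example, take trivial log structures, $Y=\Spec k$ with $[k:k^{p}]=\infty$, and $X=\Spec k^{1/p}$. Then $\ul{X^{(p)}}\cong\Spec(k^{1/p}\otimes_{k}k^{1/p})$, a local ring whose maximal ideal $I$ has $I/I^2\cong\Omega_{k^{1/p}/k}$ infinite-dimensional, so it is not Noetherian.

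This hypothesis is exactly what you need when you apply Proposition \ref{prop:flat_neat_noeth} to $F_{X/Y}\colon X\to X^{(p)}$ in the direction ``log flat $\Rightarrow$ log regular''. The paper's proof makes the same application without checking it, so this is a weak point you share with the paper rather than a defect of your route. Still, your argument for it is wrong. A repair would need either an extra hypothesis (e.g.\ $Y$ $F$-finite, or $f$ locally of finite type) or a neat-chart flatness criterion that does not require a Noetherian target.
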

\begin{proof}
    Since $\mathcal{M}_{X/Y}^{\gp}$ is $p$-torsion free, by Proposition \ref{prop:exist_neat} (2), there is an étale local fine chart
    \begin{equation*}
        (a\colon Y\to \mathsf{A}_{P},\theta\colon P\to Q,b\colon X\to \mathsf{A}_{Q})
    \end{equation*}
    for $f$ which is neat at $x\in X$.
    We claim that the chart
    \begin{equation}\label{eq:chart}
        (b\colon X\to \mathsf{A}_{Q},F_{Q/P}\colon Q^{(p)}\to Q,b^{(p)}\colon X^{(p)}\to \mathsf{A}_{Q^{(p)}})
    \end{equation}
    for $F_{X/Y}\colon X\to X^{(p)}$ is neat at $x$, where $Q^{(p)}$ is the pushout of $P\to Q$ along the Frobenius morphism $P\to P$ (taken in the category of monoids).
    Take an element $\alpha=\overline{(g,h)}\in (Q^{(p)})^{\gp}=P^{\gp}\oplus Q^{\gp}/\sim$, where $\sim$ is given by $(s^{p},1)\sim (1,\theta^{\gp}(s))$.
    Assume that $1=F_{Q/P}^{\gp}(\alpha)=\theta^{\gp}(g)h^p$.
    Since $C:=\cok(\theta^{\gp})\cong \mathcal{M}_{X/Y,\bar{x}}^{\gp}$ is $p$-torsion free, we have $\overline{h}=1$ in $C$.
    Then we can write $h=\theta^{\gp}(g_{0})$ for some $g_{0}\in P^{\gp}$ and we have $1=\theta^{\gp}(g\cdot g_{0}^{p})$.
    Since $\theta^{\gp}$ is injective, we have $1=g\cdot g_{0}^{p}$ and hence $\alpha=\overline{(g,h)}=\overline{(g_{0}^{-p},\theta^{\gp}(g_{0}))}=1$, which implies that $F_{Q/P}^{\gp}$ is injective.
    Moreover, we have
    \begin{equation*}
        \cok(F_{Q/P}^{\gp}) \cong C/C^{p} \cong \mathcal{M}_{X/X^{(p)},\bar{x}}^{\gp}.
    \end{equation*}
    Thus the chart \eqref{eq:chart} is neat at $x$.

    Take an étale local fine chart $(a,\theta,b)$ for $f$ which is neat at $x\in X$.
    Apply Lemma \ref{lem:meta_fib} below to the morphism $\ul{X}\to \ul{Y}\times_{\ul{\mathsf{A}_{P}}}\ul{\mathsf{A}_{Q}}$ and the diagram
    \begin{equation*}
        \begin{tikzpicture}[auto]
            \node (a') at (0,2) {$\ul{Y}$};
            \node (s') at (2,2) {$\ul{\mathsf{A}_{P}}$};
            \node (b') at (4,2) {$\ul{\mathsf{A}_{Q}}$};
            \node (a) at (0,0) {$\ul{Y}$};
            \node (s) at (2,0) {$\ul{\mathsf{A}_{P}}$};
            \node (b) at (4,0) {$\ul{\mathsf{A}_{Q}},$};
            \draw[->] (a') to (s');
            \draw[->] (b') to (s');
            \draw[->] (a) to (s);
            \draw[->] (b) to (s);
            \draw[->] (a') to (a);
            \draw[->] (s') to (s);
            \draw[->] (b') to (b);
        \end{tikzpicture}
    \end{equation*}
    where each of the vertical arrows is the absolute Frobenius morphism of each object.
    Then we see that the following diagram is cartesian:
    \begin{equation*}
        \begin{tikzpicture}[auto]
            \node (f) at (0,2) {$\ul{X^{(p)}}\times_{\ul{\mathsf{A}_{Q^{(p)}}}}\ul{\mathsf{A}_{Q}}$};
            \node (x) at (0,0) {$\ul{X}$};
            \node (p') at (4,2) {$\ul{Y}\times_{\ul{\mathsf{A}_{P}}}\ul{\mathsf{A}_{Q}}$};
            \node (p) at (4,0) {$\ul{Y}\times_{\ul{\mathsf{A}_{P}}}\ul{\mathsf{A}_{Q}}$};
            \draw[->] (f) to (x);
            \draw[->] (f) to (p');
            \draw[->] (x) to (p);
            \draw[->] (p') to (p);
        \end{tikzpicture}
    \end{equation*}
    Hence, by using the classical Radu--André theorem, we see that the following conditions are equivalent:
    \begin{itemize}
        \item[\textup{(1)}] $\ul{X}\to \ul{Y}\times_{\ul{\mathsf{A}_{P}}}\ul{\mathsf{A}_{Q}}$ is regular at $x$.
        \item[\textup{(2)}] $\ul{X}\to \ul{X^{(p)}}\times_{\ul{\mathsf{A}_{Q^{(p)}}}}\ul{\mathsf{A}_{Q}}$ is flat at $x$.
    \end{itemize}
    Hence the assertion follows from Propositions \ref{prop:logflat}, \ref{prop:flat_neat_noeth}, and \ref{prop:logreg_chart}.
\end{proof}

\begin{lem}\label{lem:meta_fib}
    Let $\mathcal{C}$ be a category admitting fibered products.
    Consider the following diagrams in $\mathcal{C}$
    \begin{equation*}
        \begin{tikzpicture}[auto]
            \node (a') at (0,2) {$A'$};
            \node (s') at (2,2) {$S'$};
            \node (b') at (4,2) {$B'$};
            \node (a) at (0,0) {$A$};
            \node (s) at (2,0) {$S$};
            \node (b) at (4,0) {$B,$};
            \draw[->] (a') to (s');
            \draw[->] (b') to (s');
            \draw[->] (a) to (s);
            \draw[->] (b) to (s);
            \draw[->] (a') to (a);
            \draw[->] (s') to (s);
            \draw[->] (b') to (b);
        \end{tikzpicture}
    \end{equation*}
    where the squares are commutative.
    Then, for any morphism $X\to A\times_{S}B$, the following square is cartesian:
    \begin{equation*}
        \begin{tikzpicture}[auto]
            \node (f) at (0,2) {$(X\times_{A}A')\times_{(B\times_{S}S')}B'$};
            \node (x) at (0,0) {$X$};
            \node (p') at (4,2) {$A'\times_{S'}B'$};
            \node (p) at (4,0) {$A\times_{S}B$};
            \draw[->] (f) to (x);
            \draw[->] (f) to (p');
            \draw[->] (x) to (p);
            \draw[->] (p') to (p);
        \end{tikzpicture}
    \end{equation*}
\end{lem}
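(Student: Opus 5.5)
The plan is a pure diagram chase in $\mathcal{C}$ using only the universal property of fibered products, via the pasting law for cartesian squares. Write $P=A\times_{S}B$ and $P'=A'\times_{S'}B'$, with the canonical morphism $P'\to P$ induced by the vertical arrows. Set $X_{1}=X\times_{A}A'$ and $Z=X_{1}\times_{(B\times_{S}S')}B'$. Here $X_{1}\to B\times_{S}S'$ is the map with components $X_{1}\to X\to P\to B$ and $X_{1}\to A'\to S'$. These are compatible over $S$ because $A'\to A\to S$ agrees with $A'\to S'\to S$. The claim is that $Z\cong X\times_{P}P'$ via the obvious maps, and I will prove it in two steps.

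\textbf{Step 1.} Show that $P'\cong (P\times_{A}A')\times_{(B\times_{S}S')}B'$ canonically. This is the special case $X=P$ with $X\to P$ the identity. I would check it by comparing functors of points: a $T$-point of the right-hand side is a quadruple $(a,b,a',b')$ of $T$-points of $A,B,A',B'$ such that
\begin{itemize}
\item[(i)] $a$ and $b$ agree in $S$;
\item[(ii)] $a'\mapsto a$;
\item[(iii)] $b'\mapsto b$;
\item[(iv)] $b'$ and $a'$ have the same image in $S'$.
\end{itemize}
The last condition is the one imposed by the map $B'\to B\times_{S}S'$ being compared with the second component of $X_{1}\to B\times_{S}S'$. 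Conditions (ii)--(iv) determine $a$ and $b$ from $(a',b')$, and (i) then follows automatically from (iv). So these points are exactly pairs $(a',b')$ agreeing in $S'$, that is, $T$-points of $P'$. By Yoneda, and since only representable fibered products are used, this gives the isomorphism in any category with fibered products.

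\textbf{Step 2.} Base change along $X\to P$. Set $W=B\times_{S}S'$. By the pasting law,
\[
X\times_{A}A' \cong X\times_{P}(P\times_{A}A').
\]
The morphism $X_{1}\to W$ factors through $P\times_{A}A'\to W$. Pasting again gives
\[
Z=X_{1}\times_{W}B' \cong X_{1}\times_{(P\times_{A}A')}\bigl((P\times_{A}A')\times_{W}B'\bigr)\cong X\times_{P}P',
\]
using Step 1 for the last identification. Finally, I will check that the resulting projections $Z\to X$ and $Z\to P'$ are the ones in the square, so that the square is cartesian. Alternatively, one can bypass Step 2 and redo the functor-of-points computation directly with $X$ in place of $P$.

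\textbf{Main obstacle.} There is no real mathematical obstacle. The only delicate point is bookkeeping: making sure the map $X\times_{A}A'\to B\times_{S}S'$ is the intended one and that the commutativity of the two given squares is used precisely to produce the compatibility over $S$.
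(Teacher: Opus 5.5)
Your proposal is correct and is essentially the paper's argument: the paper invokes Yoneda's lemma to reduce to the category of sets and calls the check straightforward. That is exactly your functor-of-points computation, either your Step 1 or its direct version with $X$ in place of $A\times_{S}B$, and the reduction to the universal case followed by pasting in Step 2 is just a harmless reorganization of the same verification.
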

\begin{proof}
    By Yoneda's lemma, we may assume that $\mathcal{C}$ is the category of sets.
    Then the proof is straightforward.
\end{proof}

\begin{cor}\label{cor:logKunz}
    Let $X$ be a locally Noetherian fine log scheme over $\mathbb{F}_{p}$ such that $\overline{\mathcal{M}}_{X}^{\gp}$ is $p$-torsion free.
    Then $X\to \Spec(\mathbb{F}_{p})$ is log regular if and only if $F_{X}\colon X\to X$ is log flat.
    Here, we endow $\Spec(\mathbb{F}_{p})$ with the trivial log structure.

    In particular, if $X$ is saturated, then $X$ is log regular if and only if $F_{X}$ is log flat.
\end{cor}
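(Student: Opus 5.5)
Apply Theorem \ref{thm:logRA} to the structure morphism $f\colon X\to Y=\Spec(\mathbb{F}_{p})$, where $Y$ carries the trivial log structure, and then check the hypotheses and identify $F_{X/Y}$ with $F_X$.

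\emph{Hypotheses.} Since $\mathcal{M}_Y=\mathcal{O}_Y^{\times}$, we have $\overline{\mathcal{M}}_{Y}=1$. Hence $f$ is integral: for every $x$, the map $1\to\overline{\mathcal{M}}_{X,\bar{x}}$ is integral, because any pushout along it is just $\overline{\mathcal{M}}_{X,\bar{x}}\oplus P'$, which is integral whenever $P'$ is. By the remark in the definition of neat charts, $\mathcal{M}_{X/Y,\bar{x}}=\cok(\overline{\mathcal{M}}_{Y,f(\bar{x})}\to\overline{\mathcal{M}}_{X,\bar{x}})=\overline{\mathcal{M}}_{X,\bar{x}}$, so $\mathcal{M}_{X/Y}^{\gp}=\overline{\mathcal{M}}_{X}^{\gp}$ is $p$-torsion free by assumption. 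Local Noetherianity is clear, since $\Spec(\mathbb{F}_p)$ is Noetherian.

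\emph{Identification.} The Frobenius $F_Y$ of $\Spec(\mathbb{F}_p)$ is the identity on the underlying scheme. On log structures it is $u\mapsto u^{p}$ on $\mathbb{F}_p^{\times}$, and this is also the identity since $u^{p}=u$ in $\mathbb{F}_p$. So $F_Y=\id$ as a morphism of log schemes. The cartesian square then gives $X^{(p)}=X\times_{Y,\id}Y=X$ with projection $\id$, and the relative Frobenius is exactly $F_{X/Y}=F_X$. Theorem \ref{thm:logRA} now says $f$ is log regular if and only if $F_X$ is log flat.

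\emph{Saturated case.} If $X$ is saturated, then $X$ is $p$-saturated, so $\overline{\mathcal{M}}_{X,\bar{x}}$ is a $p$-saturated sharp monoid. If $z\in\overline{\mathcal{M}}^{\gp}_{X,\bar{x}}$ satisfies $z^{p}=1$, then $z^p$ and $(z^{-1})^p$ lie in the monoid, so $z$ and $z^{-1}$ do, hence $z$ is a unit and so $z=1$. Thus the torsion hypothesis holds automatically. (Alternatively, apply Proposition \ref{prop:n-sat} with $n=p$: $f$ is $p$-saturated, since pushouts along $1\to Q$ are direct sums.) Since $\mathbb{F}_p$ is perfect, part (2) of the proposition comparing relative log regularity with Kato's absolute notion shows that $f$ is log regular if and only if $X$ is log regular in Kato's sense. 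This gives the last assertion.

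The main obstacle is the careful check that $F_Y$ is the identity as a \emph{log} morphism, together with the compatibility of the fibre product in log schemes. Both are straightforward here because $\mathbb{F}_p^{\times}$ is fixed by $p$-th powers, and the fibre product along the identity is canonically $X$.
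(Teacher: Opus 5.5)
Your proposal is correct and follows the route the paper intends (the paper states the corollary without a separate proof). You specialize Theorem \ref{thm:logRA} to $X\to\Spec(\mathbb{F}_p)$ with the trivial log structure: integrality is automatic, $\mathcal{M}_{X/Y}=\overline{\mathcal{M}}_X$, and $F_Y=\id$ gives $X^{(p)}=X$ and $F_{X/Y}=F_X$. For the saturated case you use that saturatedness forces $\overline{\mathcal{M}}_X^{\gp}$ to be $p$-torsion free, as in the paper's closing remark, together with part (2) of the comparison with Kato's log regularity over the perfect field $\mathbb{F}_p$. One small precision: justify $F_Y=\id$ as a log morphism by noting that a morphism into a log scheme with trivial log structure is determined by its underlying morphism of schemes, rather than by $u^{p}=u$, because on the étale site the sections of $\mathcal{O}_Y^{\times}$ over $\Spec(\mathbb{F}_{p^n})$ are not fixed by $u\mapsto u^{p}$.
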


\begin{rem}
    
    \indent

    \begin{itemize}
        \item[(1)] By Proposition \ref{prop:n-sat}, the condition that $\mathcal{M}_{X/Y}^{\gp}$ is $p$-torsion free in Theorem \ref{thm:logRA} is satisfied when $X$ and $f$ are $p$-saturated.
        \item[(2)] The condition that $\overline{\mathcal{M}}_{X}^{\gp}$ is $p$-torsion free in Corollary \ref{cor:logKunz} is satisfied when $X$ is $p$-saturated.
    \end{itemize}
\end{rem}


\end{document}